\newtheorem{theorem}{Theorem}[section]
\newtheorem{corollary}[theorem]{Corollary}
\newtheorem{lemma}[theorem]{Lemma}
\newtheorem{proposition}[theorem]{Proposition}
\newtheorem{definition}[theorem]{Definition}
\title{\bf Long-time behavior of a nonlocal dispersal logistic model with seasonal succession}
\date{}
\author{Zhenzhen Li, Binxiang Dai\footnote{Corresponding author. Email address: \url{zhzhli@csu.edu.cn} (Z. Li), \url{bxdai@csu.edu.cn} (B. Dai)}  \\
{\scriptsize School of Mathematics and Statistics, HNP-LAMA, Central South University, Changsha, Hunan, 410083, PR China}}
\begin{document}
\maketitle

\begin{abstract}
This paper is devoted to a nonlocal dispersal logistic model with seasonal succession in one-dimensional bounded habitat, where the seasonal succession accounts for the effect of two different seasons. Firstly, we provide the persistence-extinction criterion for the species, which is different from that for local diffusion model. Then we show the asymptotic profile of the time-periodic positive solution as the species persists in long run.

{\bf Keywords: }Nonlocal dispersal; Seasonal succession; Persistence-extinction

{\bf MSC(2020): }35B40; 35K57; 92D25
\end{abstract}

\section{Introduction}
\setcounter{equation}{0}

The nonlocal diffusion as a long range process can well describe some natural phenomena
in many situations (Andreu-Vaillo et al. \cite{AMRT10}, Fife \cite{F03}). Recently, nonlocal diffusion equations have attracted much attention and have been used to simulate different dispersal phenomena in material science (Bates \cite{B06}), neurology (Sun, Yang and Li \cite{SYL14}), population ecology (Hutson et al. \cite{H03}, Kao, Lou and Shen \cite{KLS10}), etc. Especially, the spectral properties of nonlocal dispersal operators and the essential differences between them and local dispersal operators are studied in Coville \cite{C10}, Coville, D\'avila and Mart\'inez \cite{CDM13}, Garc\'ia-Meli\'an and Rossi \cite{GR09},
Shen and Zhang \cite{SZ10} and Sun, Yang and Li \cite{SYL14}. A widely used nonlocal diffusion operator has the form
\[ (J*u-u)(t,x):=\int_{\mathbb R}J(x-y)u(t,y)\mathrm dy-u(t,x), \]
which can capture the factors of \lq long-range dispersal' as well as \lq short-range dispersal'.

Time-varying environmental conditions are important for the growth and survival of species. Seasonal forces in nature are a common cause of environmental change, affecting not only the growth of species but also the composition of communities \cite{DTD09,D88}. The growth of species is actually driven by both external and internal dynamics. For instance, in temperate lakes, phytoplankton and zooplankton grow during the warmer months and may die or lie dormant during the winter. This phenomenon is termed as seasonal succession.

In the present paper, we are concerned with the nonlocal dispersal logistic model with seasonal succession as follows:
\begin{equation}\label{eq1.1}
\begin{cases}
u_t= -\delta u, & i\omega<t\leq (i+\rho)\omega,\ l_1\leq x\leq l_2, \\
\displaystyle u_t= d(J*u-u)(t,x)+u(a-bu), & (i+\rho)\omega<t\leq (i+1)\omega,\ l_1\leq x\leq l_2, \\
u(0,x)=u_0(x), & x\in[l_1,l_2],
\end{cases}
\end{equation}
where $u(t,x)$ is the population density of a species at time $t$ and location $x$ in the one-dimensional bounded habitat $[l_1,l_2]\subset \mathbb R$. All parameters $\delta,a,b$ and $d$ are positive constants. The kernel function $J:\mathbb R\to \mathbb R$ is assumed to satisfy
\begin{enumerate}[${\bf (J):}$]
\item $J\in C(\mathbb R)\cap L^{\infty}(\mathbb R)$ is nonnegative, even, $J(0)>0$ and $\int_{\mathbb R}J(x)\mathrm dx=1$.
\end{enumerate}
Here the parameter $d$ stands for the diffusion rate of the species. Let $J(x-y)$ be the probability distribution of the species jumping from location $y$ to location $x$, then $\int_{\mathbb R}J(x-y)u(t,y)\mathrm dy$ represents the rate where individuals are arriving at location $x$ from all other places and $-u(t,x)=-\int_{\mathbb R}J(x-y)u(t,x)\mathrm dy$ is the rate at which they are leaving location $x$ to travel to all other sites.
In such model, $u=0$ on $\mathbb R\setminus[l_1,l_2]$ represents homogeneous Dirichlet type boundary condition, which implies that the exterior environment is hostile and the individuals will die when they reach the boundary of habitat $[l_1,l_2]$. The initial function $u_0(x)$ is nonnegative continuous function.  Here and in what follows, unless specified otherwise, we always take $i\in \mathbb Z_+=\{0,1,2,\cdots\}$.

In \eqref{eq1.1}, it is assumed that the species $u$ undergoes two different seasons: the bad season and the good season. In the bad season: $i\omega<t< (i+\rho)\omega$, for instance, from winter to spring, the species can notcan not get enough food to feed themselves and its density are declining exponentially. During this season, the population has no ability to move in space. In the good season (for instance, from summer and autumn): $(i+\rho)\omega<t\leq (i+1)\omega$, we assume that the spatiotemporal distribution of the species $u$ are governed by the classical nonlocal dispersal logistic equation. Parameters $\omega$ and $1-\rho$ represent the period of seasonal succession and the duration of the good season, respectively.

In fact, if we define the time-periodic finctions
\begin{equation}\label{eq1.2}\small
D(t)=\begin{cases}
0, & t\in (i\omega,(i+\rho)\omega], \\
d, & t\in ((i+\rho)\omega,(i+1)\omega],
\end{cases}
\bar a(t)=\begin{cases}
-\delta, & t\in (i\omega,(i+\rho)\omega], \\
a, & t\in ((i+\rho)\omega,(i+1)\omega],
\end{cases}
\bar b(t)=\begin{cases}
0, & t\in (i\omega,(i+\rho)\omega], \\
b, & t\in ((i+\rho)\omega,(i+1)\omega],
\end{cases}
\end{equation}
then model \eqref{eq1.1} can be rewritten as
\begin{equation}\label{eq1.3}
\begin{cases}
u_t=D(t)(J*u-u)(t,x)+u(t,x)(\bar a(t)-\bar b(t)u(t,x)),& t>0,l_1\leq x\leq l_2, \\
u(0,x)=u_0(x), & x\in[l_1,l_2],
\end{cases}
\end{equation}
which is a nonlocal dispersal piecewise smooth time-periodic system.

The models with seasonal succession have been investigated by several authors. Ignoring the spatial evolution of the involved species, the effects of seasonal succession on the dynamics of population can be analysed by ODE models, see \cite{HZ12,LK01} and references therein. There are also some investigations on it by the numerical method, see, e.g. \cite{HT95,SSHKL09}. In \cite{HZ12}, Hsu and Zhao first considered the single species model with seasonal succession:
\begin{equation}\label{eq1.2}
\begin{cases}
z_t=-\delta,  & i\omega<t\leq (i+\rho)\omega, \\
z_t=z(a-bz),  & (i+\rho)\omega<t\leq (i+1)\omega, \\
z(0)=z_0\in\mathbb R_+:=[0,\infty), &
\end{cases}
\end{equation}
where $z(t)$ denotes the population density of a species at time $t$. They showed the threshold dynamics of model \eqref{eq1.2}: when $a(1-\rho)-\delta\rho\leq 0$, the unique solution of \eqref{eq1.2} converges to zero among all nonnegative initial value, while when $a(1-\rho)-\delta\rho> 0$, it converges to the unique positive $\omega$-periodic solution of \eqref{eq1.2} for all positive initial value.

Taking spatial factor into account, Peng and Zhao \cite{PZ13} investigated the following local diffusion model with seasonal succession:
\begin{equation}\label{eq1.5}
\begin{cases}
u_t=-\delta u, & i\omega<t\leq (i+\rho)\omega,\ x\in(l_1,l_2), \\
u_t-du_{xx}=u(a-bu),  & (i+\rho)\omega<t\leq (i+1)\omega,\ x\in(l_1,l_2), \\
u(t,l_1)=u(t,l_2)=0, & t\geq 0, \\
u(0,x)=u_0(x)\geq 0, & x\in(l_1,l_2),
\end{cases}
\end{equation}
where the parameter $d$ stands for the intensity of random diffusion. The positive constants $\omega,\rho,\delta,a,b$ have the same biological interpretations as in \eqref{eq1.1}, and the initial function $u_0\in C^2([l_1,l_2])$. Denote by $\lambda_1^l$ the principal eigenvalue of the eigenvalue problem
\[\begin{cases}
\varphi_t=-\delta \varphi+\lambda\varphi, & i\omega<t\leq (i+\rho)\omega,\ x\in(l_1,l_2), \\
\varphi_t-d\varphi_{xx}=a\varphi+\lambda \varphi,  & (i+\rho)\omega<t\leq (i+1)\omega,\ x\in(l_1,l_2), \\
\varphi>0, & (i+\rho)\omega<t\leq (i+1)\omega,\ x\in(l_1,l_2), \\
\varphi(t,l_1)=\varphi(t,l_2)=0, & t\geq 0, \\
\varphi(t,x)=\varphi(t+\omega,x), & x\in(l_1,l_2).
\end{cases}\]
One can calculate exactly that $\lambda_1^l=(1-\rho)(\frac{\pi^2d}{(l_2-l_1)^2}-a)+\rho\delta$. By the consequence of \cite[Theorem 2.3.4]{Z03}, Peng and Zhao \cite{PZ13} has showed that, the solution of \eqref{eq1.5} converges to zero among all nonnegative initial value if $\lambda_1^l\geq 0$, while when $\lambda_1^l<0$, it converges to the unique positive $\omega$-periodic solution of \eqref{eq1.5} for all nonnegative and not identically zero initial value. Specially, we can observe that
\begin{enumerate}[(i)]
\item if $(1-\rho)a-\rho\delta>0$, then the solution of \eqref{eq1.5} converges to the unique positive $\omega$-periodic solution of \eqref{eq1.5} for all nonnegative and not identically zero initial value;
\item if $(1-\rho)a-\rho\delta<0$, then there exists a critical value $\hat l$ such that the solution of \eqref{eq1.5} converges to the unique positive $\omega$-periodic solution of \eqref{eq1.5} for all nonnegative and not identically zero initial value if and only if $l_2-l_1>\hat l$.
\end{enumerate}

The dynamics of the time-periodic nonlocal dispersal logistic equation have been studied by many authors (see \cite{RS12,SLW17,SV19,SLLY20}). In \cite{RS12}, Rawal and Shen studied the eigenvalue problems of time-periodic nonlocal dispersal operator, and then showed that the existence of positive periodic solution relies on the sign of principal eigenvalue of a linearized eigenvalue problem. Sun et al. \cite{SLW17} considered a time-periodic nonlocal dispersal logistic equation in spatial degenerate environment. Shen and Vo \cite{SV19} and Su et al. \cite{SLLY20} have studied the asymptotic profiles of the generalised principal eigenvalue of time-periodic nonlocal dispersal operators under Dirichlet type boundary conditions and Neumann type boundary conditions, respectively. The models considered in the above mentioned work are all smooth periodic systems.

The purpose of current paper is to study the dynamical properties of nonlocal dispersal model  \eqref{eq1.1}. Clearly, system \eqref{eq1.1} is in time-periodic environment and the dispersal term and reaction term are both discontinuous and periodic in $t$ caused by the seasonal succession. Note that, by general semigroup theory (see \cite{P83}), \eqref{eq1.1} has a unique local solution $u(t,\cdot;u_0)$ with initial value $u(0,\cdot;u_0)=u_0\in C([l_1,l_2])$, which is continuous in $t$.
If $u_0$ is nonnegative over $[l_1,l_2]$, then by a comparison argument, $u(t,\cdot;u_0)$ exists and is nonnegative for all $t>0$ (see Lemma \ref{lem3.2}). Next, we have the following theorem on the long time behavior of model \eqref{eq1.1}.
\begin{theorem}\label{thm1.1}
Assume that ${\bf (J)}$ holds and $-\infty<l_1<l_2<+\infty$. Let $u(t,\cdot;u_0)$ be the unique solution to \eqref{eq1.1} with the initial value $u_0(x)\in C([l_1,l_2])$, where $u_0(x)$ is nonnegative and not identically zero. Then the following statements are true:
\begin{enumerate}[(1)]
\item If $(1-\rho)a-\rho\delta>(1-\rho)d$, then $\lim\limits_{n\to\infty}u(t+n\omega,x;u_0)=u_{(l_1,l_2)}^*(t,x)$ in $C([0,\omega]\times [l_1,l_2])$, where $u_{(l_1,l_2)}^*(t,x)$ is the unique $\omega$-periodic positive solution of
    \begin{equation}\label{eq1.6}
    \begin{cases}
    u_t= -\delta u, & i\omega<t\leq (i+\rho)\omega,\ l_1\leq x\leq l_2, \\
    \displaystyle u_t= d\int_{l_1}^{l_2}J(x-y)u(t,y)\mathrm dy-du(t,x)+u(a-bu), & (i+\rho)\omega<t\leq (i+1)\omega,\ l_1\leq x\leq l_2, \\
    u(t,x)=u(t+\omega,x), & t\geq 0,\ x\in[l_1,l_2];
    \end{cases}
    \end{equation}
\item If $0<(1-\rho)a-\rho\delta\leq (1-\rho)d$, then there exists a unique $\ell^*>0$ such that $\lim\limits_{n\to\infty}u(t+n\omega,x;u_0)=u_{(l_1,l_2)}^*(t,x)$ in $C([0,\omega]\times [l_1,l_2])$ if and only if $l_2-l_1>\ell^*$;
\item If $(1-\rho)a-\rho\delta\leq 0$, then $0$ is the unique nonnegative solution of \eqref{eq1.6}, and $\lim\limits_{t\to\infty}u(t,x;u_0)=0$ uniformly for $x\in[l_1,l_2]$.
\end{enumerate}
\end{theorem}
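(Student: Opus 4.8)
The plan is to reduce the long-time behaviour of \eqref{eq1.1} to the sign of a suitable principal eigenvalue and then to an explicit condition on the parameters and the habitat length, following the same three-part structure as the statement. First I would set up the linearized periodic eigenvalue problem associated with \eqref{eq1.1}: find $\lambda = \lambda_1([l_1,l_2])$ and a positive $\omega$-periodic $\varphi(t,x)$ solving $\varphi_t = -\delta\varphi + \lambda\varphi$ on the bad season and $\varphi_t = d(J*\varphi-\varphi) + a\varphi + \lambda\varphi$ on the good season, with $\varphi \equiv 0$ outside $[l_1,l_2]$. Existence, simplicity and the usual variational/monotonicity properties of this principal eigenvalue would be quoted from the time-periodic nonlocal dispersal theory (Rawal--Shen \cite{RS12}), after checking that the piecewise-smooth periodic coefficients cause no difficulty because the bad-season part is an ODE that can be solved explicitly and composed with the good-season evolution; concretely, the period map of the linearization factors as $e^{-\delta\rho\omega}$ times the time-$(1-\rho)\omega$ solution operator of the nonlocal logistic linearization, so $\lambda_1([l_1,l_2]) = \rho\delta + (1-\rho)\big(\lambda_{p,d}([l_1,l_2]) - a\big)$, where $\lambda_{p,d}$ is the principal eigenvalue of $-d(J*\cdot - \cdot)$ on $[l_1,l_2]$ with Dirichlet-type exterior condition.

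Next I would prove the dichotomy: if $\lambda_1([l_1,l_2]) \ge 0$ then $0$ is globally attracting, and if $\lambda_1([l_1,l_2]) < 0$ then there is a unique positive $\omega$-periodic solution $u^*_{(l_1,l_2)}$ which attracts all nonnegative nontrivial initial data. The extinction case is handled by a supersolution of the form $M\varphi(t,x)e^{\lambda_1 n\omega}$ (or, when $\lambda_1 = 0$, a more delicate argument using strict negativity of the logistic term to get decay). The persistence case is the standard monotone-dynamical-systems argument: the period map $Q(u_0) := u(\omega,\cdot;u_0)$ is compact, strongly monotone and subhomogeneous on the cone of nonnegative continuous functions (sub-homogeneity coming from the $-bu^2$ term, strong monotonicity from the comparison principle in Lemma~\ref{lem3.2} together with the nonlocal coupling in the good season); $\lambda_1 < 0$ makes $0$ linearly unstable, so by the Krein--Rutman / subhomogeneous-map theory $Q$ has a unique positive fixed point that is globally attracting from nontrivial data, and interpolating the semiflow over $t \in [0,\omega]$ upgrades convergence of the iterates to convergence in $C([0,\omega]\times[l_1,l_2])$.

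Finally I would translate $\operatorname{sign}\lambda_1([l_1,l_2])$ into the stated parameter conditions. The key input is the asymptotics of $\lambda_{p,d}([l_1,l_2])$ as the habitat length $L = l_2 - l_1$ varies: $\lambda_{p,d}$ is continuous and strictly decreasing in $L$, and (this is the genuinely nonlocal phenomenon, contrasting with the local case where $\pi^2 d/L^2 \to 0$) one has $\lambda_{p,d}([l_1,l_2]) \to d$ as $L \to 0^+$ is false — rather $\lambda_{p,d} \in (0,d)$ for all finite $L$ with $\lambda_{p,d}([l_1,l_2]) \downarrow 0$ as $L \to \infty$ and $\lambda_{p,d}([l_1,l_2]) \uparrow d$ as $L \to 0^+$; this bounded range is exactly why the threshold structure differs from \eqref{eq1.5}. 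Hence $\lambda_1([l_1,l_2]) = \rho\delta + (1-\rho)(\lambda_{p,d} - a)$: if $(1-\rho)a - \rho\delta > (1-\rho)d$, i.e. $a - \delta\rho/(1-\rho) > d$, then $\lambda_1 < \rho\delta + (1-\rho)(d-a) < 0$ for every $L$, giving (1); if $0 < (1-\rho)a - \rho\delta \le (1-\rho)d$, then $\lambda_1([l_1,l_2])$ runs from $\rho\delta + (1-\rho)(d-a) \ge 0$ (at small $L$) down to $\rho\delta - (1-\rho)a < 0$ (as $L \to \infty$), so by strict monotonicity and continuity there is a unique $\ell^* > 0$ with $\lambda_1 < 0 \iff L > \ell^*$, giving (2); and if $(1-\rho)a - \rho\delta \le 0$ then $\lambda_1 \ge \rho\delta - (1-\rho)a + (1-\rho)\lambda_{p,d} > 0$ for all $L$ (using $\lambda_{p,d} > 0$), so extinction always holds and $0$ is the only nonnegative periodic solution, giving (3). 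The main obstacle I anticipate is establishing the precise monotonicity and the limiting values of $\lambda_{p,d}([l_1,l_2])$ in $L$ — in particular the lower bound $\lambda_{p,d} > 0$ for finite $L$ and the supremum $d$ as $L \to 0^+$ — since this is where the nonlocal operator departs qualitatively from the Laplacian and requires the spectral results of \cite{C10,SV19} rather than an explicit eigenfunction computation.
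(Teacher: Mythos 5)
Your eigenvalue reduction is essentially the one the paper uses: because the coefficients are piecewise constant in time, the time-periodic principal eigenvalue can be assembled explicitly from the time-independent nonlocal Dirichlet eigenpair $(\sigma_1,\phi_1)$ of $d(J*\phi-\phi)+a\phi=-\sigma\phi$, giving $\lambda_1=(1-\rho)\sigma_1+\rho\delta$ with a positive periodic eigenfunction $\varphi(t,x)=e^{(\lambda_1 t-\int_0^t\sigma(s)ds)}\phi_1(x)$; your $\lambda_{p,d}$ is $\sigma_1+a$, and the monotonicity, continuity and the limits $\sigma_1\uparrow d-a$ as $L\to0^+$, $\sigma_1\downarrow-a$ as $L\to\infty$ are exactly what the paper imports from \cite{CDLL19} (Proposition \ref{prop3.6}). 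Your translation of $\operatorname{sign}\lambda_1$ into the three parameter regimes coincides with the paper's, and your identification of the bounded range $(0,d)$ of $\lambda_{p,d}$ as the source of the contrast with the local model is the right intuition. (You do not need Rawal--Shen here; the explicit construction above replaces it.)

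There is, however, a genuine gap in your persistence step. You assert that the period map $Q(u_0)=u(\omega,\cdot;u_0)$ is \emph{compact} on $C([l_1,l_2])$ and then invoke Krein--Rutman / the standard theory of compact, strongly monotone, subhomogeneous maps. For nonlocal dispersal this fails: the good-season solution operator has the form $e^{-d(1-\rho)\omega}\,\mathrm{Id}+(\text{compact})$, and the bad season only multiplies by the constant $e^{-\delta\rho\omega}$, so $Q$ is a $\kappa$-set-contraction with $\kappa>0$ but is not compact, and orbits need not be precompact in $C([l_1,l_2])$. This is precisely the obstruction the paper is organized around: instead of citing the compact monotone-systems theorems, it builds a periodic upper--lower solution iteration (Theorem \ref{thm3.7}) squeezing $u(t+n\omega,\cdot)$ between monotone sequences $\underline u^n\uparrow\underline u$ and $\overline u^n\downarrow\overline u$, proves uniqueness of the positive periodic solution by a part-metric contraction argument (the quantities $\gamma_n=\inf\{\ln\alpha:\alpha^{-1}u(\cdot+n\omega;M)\le u(\cdot+n\omega;\varepsilon\varphi)\le\alpha u(\cdot+n\omega;M)\}$, which decrease to $0$ by subhomogeneity), and recovers uniform convergence on $[0,\omega]\times[l_1,l_2]$ from the semicontinuity of the monotone limits via Dini's theorem rather than from compactness. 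If you want to keep your route you must either replace ``compact'' by an argument for $\kappa$-condensing monotone subhomogeneous maps, or supply the part-metric/iteration argument; as written, the step ``$Q$ is compact, hence the subhomogeneous-map theory applies'' would fail. A smaller point: in the extinction case your supersolution should decay like $M\varphi(t,x)e^{-\lambda_1 t}$ (note the sign), and the borderline $\lambda_1=0$ is handled in the paper not by a refined decay estimate but by first showing nonexistence of positive periodic solutions (comparison with multiples of $\varphi$, which are upper-solutions of \eqref{eq1.6} when $\lambda_1\ge0$) and then squeezing with the upper--lower solution scheme, so that $\overline u=\underline u=0$.
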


Theorem \ref{thm1.1} shows a complete classification on all possible long time behavior of system \eqref{eq1.1} with the assumption ${\bf (J)}$. The criteria governing persistence and extinction of the species show that: (i) When the duration of the bad season is too long (namely, $\rho$ is close to $1$), or the season is too bad (for example, bad weather and food shortages contributes to the large death rate $\delta$) such that $(1-\rho)a-\rho\delta\leq 0$, then the species will die out eventually regardless the initial population size; (ii) If the bad season is not long, or the food resource $a$ is not small such that $\rho\delta<(1-\rho)a\leq (1-\rho)d+\rho\delta$, then both persistence and extinction are determined by the range of the habitat of the species; (iii) When the good season is very long (i.e., $\rho$ is close to $0$), or the species has enough food such that $(1-\rho)(a-d)-\rho\delta>0$, then the species can persist for long time, which is different from that for the local diffusion model \eqref{eq1.5}.

The following conclusion concerns the asymptotic profile of the $\omega$-periodic positive solution $u_{(l_1,l_2)}^*$ of \eqref{eq1.6}.
\begin{theorem}\label{thm1.2}
Assume that ${\bf (J)}$ holds. If $(1-\rho)a-\rho\delta>0$, then there exists $\hat\ell>0$ such that $\lambda_1(-L_{(l_1,l_2)})<0$ for every interval $(l_1,l_2)$ with $l_2-l_1>\hat\ell$ and hence \eqref{eq1.6} admits a unique positive $\omega$-periodic solution $u_{(l_1,l_2)}^*(t,x)$. Moreover,
\[ \lim_{-l_1,l_2\to+\infty}u_{(l_1,l_2)}^*(t,x)=z^*(t)\ \ \mathrm{in}\ C_{\mathrm{loc}}([0,\omega]\times\mathbb R), \]
where $z^*(t)$ is the unique $\omega$-periodic positive solution of the following equation
  \begin{equation}\label{eq1.7}
  \begin{cases}
  z_t=-\delta z, & i\omega< t\leq (i+\rho)\omega, \\
  z_t=z(a-bz), & (i+\rho)\omega<t\leq (i+1)\omega, \\
  z(t+\omega)=z(t), & t\geq 0.
  \end{cases}
  \end{equation}
\end{theorem}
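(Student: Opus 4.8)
The plan is to prove Theorem~\ref{thm1.2} in two stages: first establish the existence of $\hat\ell$ and the existence/uniqueness of $u_{(l_1,l_2)}^*$, then carry out the limiting analysis as $-l_1,l_2\to+\infty$. For the first stage, I would recall from Theorem~\ref{thm1.1} (and the underlying eigenvalue analysis) that persistence is governed by the sign of the principal eigenvalue $\lambda_1(-L_{(l_1,l_2)})$ of the associated time-periodic linearized nonlocal operator. The key monotonicity fact is that $\lambda_1(-L_{(l_1,l_2)})$ is nonincreasing in the domain $(l_1,l_2)$ (longer intervals cannot increase the principal eigenvalue), and that as $l_2-l_1\to\infty$ the principal eigenvalue converges to the principal eigenvalue of the spatially homogeneous periodic ODE problem, whose value is exactly $-[(1-\rho)a-\rho\delta]+(1-\rho)\cdot 0 = \rho\delta-(1-\rho)a$ (since the nonlocal term $d(J*\varphi-\varphi)$ integrates away against a constant eigenfunction). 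Hence, under the hypothesis $(1-\rho)a-\rho\delta>0$, this limit is negative, so there is a threshold $\hat\ell$ beyond which $\lambda_1(-L_{(l_1,l_2)})<0$, and existence/uniqueness of the positive $\omega$-periodic solution then follows from the same sub/supersolution and monotone-iteration argument already used to prove part~(1) of Theorem~\ref{thm1.1}.

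For the convergence statement, I would proceed by a compactness-plus-identification scheme. First, produce uniform (in the domain) a priori bounds: by comparison with the spatially homogeneous problem \eqref{eq1.7}, one gets $0<u_{(l_1,l_2)}^*(t,x)\le z^*(t)\le M$ for all admissible $(l_1,l_2)$ and all $(t,x)$, with $M$ independent of the domain; a positive lower bound on compact subsets will come once the limit is identified (or can be obtained directly by building a small stationary-in-space subsolution on a large fixed ball). Next, fix an arbitrary bounded interval $K=[-R,R]$; for domains with $-l_1,l_2$ large, restrict $u_{(l_1,l_2)}^*$ to $[0,\omega]\times K$. Because on the good-season interval $u_t = d(J*u - u) + u(a-bu)$ and on the bad-season interval $u_t=-\delta u$, the uniform $L^\infty$ bound gives a uniform bound on $u_t$ (the convolution term is controlled since $\|J\|_{L^\infty}<\infty$ and $u$ is uniformly bounded), so the family is equicontinuous in $t$ uniformly on $K$; combined with the bound this yields, via Arzel\`a--Ascoli in the $t$-variable together with a diagonal argument over an exhausting sequence of balls, a subsequence $u_{(l_1^n,l_2^n)}^*$ converging in $C_{\mathrm{loc}}([0,\omega]\times\mathbb R)$ to some nonnegative $\omega$-periodic limit $U(t,x)$.

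It then remains to identify $U$. Passing to the limit in the weak/integral (Duhamel) formulation of \eqref{eq1.6}, the convolution $\int_{l_1^n}^{l_2^n}J(x-y)u_{(l_1^n,l_2^n)}^*(t,y)\,dy \to \int_{\mathbb R}J(x-y)U(t,y)\,dy$ on compacts (using $\int_{\mathbb R}J=1$ and dominated convergence), so $U$ solves the nonlocal periodic problem on all of $\mathbb R$:
\[
U_t = D(t)(J*U - U) + U(\bar a(t) - \bar b(t)U), \qquad U(t+\omega,\cdot)=U(t,\cdot).
\]
Here the hard part is ruling out $U\equiv 0$ and showing $U$ is the spatially homogeneous $z^*$. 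For nontriviality I would use the uniform-in-domain lower bound: since $(1-\rho)a-\rho\delta>0$, a spatially constant subsolution argument on each large domain (the constant being a fixed fraction of $z^*$ once the domain exceeds $\hat\ell$, with constant independent of how large the domain is) forces $\liminf u_{(l_1^n,l_2^n)}^*>0$ on compacts, hence $U>0$. For spatial homogeneity, I would invoke a Liouville-type uniqueness result for the bounded positive $\omega$-periodic solution of the problem on $\mathbb R$: the constant-in-space function $z^*(t)$ is one such solution, and by the strong maximum principle / sliding method adapted to the nonlocal periodic setting (or by a squeezing argument between translates, using translation invariance of the whole-line problem and the uniform bounds), the bounded positive periodic solution is unique, hence $U\equiv z^*$. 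Since the limit is the same for every subsequence, the full family converges, giving the claim. I expect the identification step — specifically the Liouville/uniqueness argument on $\mathbb R$ for the nonlocal piecewise-smooth periodic equation — to be the main obstacle, since the lack of compactness of the nonlocal semigroup and the discontinuity in $t$ prevent a direct appeal to standard parabolic results; the resolution is to exploit translation invariance and the explicit homogeneous comparison function $z^*$ together with the uniform two-sided bounds.
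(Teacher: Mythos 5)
Your first stage (existence of $\hat\ell$ and of $u^*_{(l_1,l_2)}$ via the limit $\lambda_1(-L_{(l_1,l_2)})=(1-\rho)\sigma_1+\rho\delta\to\rho\delta-(1-\rho)a<0$) is exactly the paper's argument, resting on Proposition \ref{prop3.6} and Theorem \ref{thm1.1}. The second stage, however, diverges from the paper and has two genuine gaps. First, the compactness step does not go through as written: uniform boundedness plus equicontinuity in $t$ gives Arzel\`a--Ascoli only in the $t$-variable; to extract a subsequence converging in $C_{\mathrm{loc}}([0,\omega]\times\mathbb R)$ you also need equicontinuity in $x$, uniformly over the family of domains, and the nonlocal operator provides no spatial smoothing to supply it. The natural attempt --- propagate a spatial modulus of continuity $\theta$ through one period and close it by periodicity --- yields an inequality of the form $\theta\le e^{-\delta\rho\omega+C(1-\rho)\omega}\theta+\kappa$ (with $\kappa$ the $L^1$-translation modulus of $J$), whose factor need not be less than $1$, so no uniform modulus follows; without it you get at best pointwise limits and cannot conclude $C_{\mathrm{loc}}$ convergence even after identifying the limit. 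Second, the identification step is left as an acknowledged ``main obstacle'': the Liouville-type uniqueness of bounded positive $\omega$-periodic solutions on $\mathbb R$ is asserted rather than proved, and the proposed uniform lower bound via a ``spatially constant subsolution'' on a large bounded domain fails near the boundary, where $\int_{l_1}^{l_2}J(x-y)\,\mathrm dy$ stays near $1/2$ no matter how large the domain is, so the mass-loss term $dc\big(\int_{l_1}^{l_2}J(x-y)\,\mathrm dy-1\big)\approx -dc/2$ does not vanish and a positive constant is not a subsolution without extra parameter restrictions.

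The paper avoids both difficulties by proving the convergence directly, with no compactness and no Liouville theorem: for each small $\epsilon$ it shows that $(1-\eta)z^*(t)$ and $(1+\eta)z^*(t)$ become strict sub- and supersolutions of the periodic problem once $-l_1,l_2$ are large (the only error being $d(1\mp\eta)z^*(t)\big[\int_{l_1}^{l_2}J(x-y)\,\mathrm dy-1\big]$, absorbed by the strict term $-b\eta(1\mp\eta)(z^*)^2$), and then squeezes $u^*_{(l_1,l_2)}$ between them by a sweeping argument on $\beta_*=\inf\{\beta>0:\hat z\le\beta u^*_{(l_1,l_2)}\}$, evaluating the equation at a touching point; this gives $z^*-\epsilon\le u^*_{(l_1,l_2)}\le z^*+\epsilon$ and hence the $C_{\mathrm{loc}}$ convergence at once. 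To salvage your route you would need a uniform spatial modulus of continuity for the family $\{u^*_{(l_1,l_2)}\}$, a subsolution supported away from the boundary (e.g.\ $\epsilon$ times a principal eigenfunction on a fixed large interval) for the lower bound, and a proof of the whole-line Liouville property; each is a substantive missing piece.
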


The rest part of this paper is organized as follows. Sections \ref{sec2} are devoted to the global existence and uniqueness of solution of \eqref{eq1.1}. In Section \ref{sec3}, we then study the long-time dynamical behavior of system \eqref{eq1.1} based on the results for the time-periodic eigenvalue problem and time periodic upper-lower solutions. We also show some discussion in the final section.

\section{Well-posedness\label{sec2}}
\setcounter{equation}{0}

In this section, we show the existence and uniqueness of the global solution of \eqref{eq1.1}. Before the statement of well-posedness of solution to \eqref{eq1.1}, we provide a maximum principle.
\begin{lemma}[Maximum principle]\label{lem3.1}
Let $m$ be a positive integer. Assume that ${\bf (J)}$ holds and $-\infty<l_1<l_2<+\infty$. Suppose that $v,v_t\in C([0,m\omega]\times[l_1,l_2]), c\in L^{\infty}([0,m\omega]\times[l_1,l_2])$ and
\begin{equation}\label{eq2e1}
\begin{cases}
v_t\geq -\delta v, & i\omega<t\leq (i+\rho)\omega,\ l_1\leq x\leq l_2, \\
\displaystyle v_t\geq d\int_{l_1}^{l_2}J(x-y)v(t,y)\mathrm dy-dv(t,x)+c(t,x)v, & (i+\rho)\omega<t\leq (i+1)\omega,\ l_1\leq x\leq l_2, \\
v(0,x)\geq 0, & x\in[l_1,l_2],
\end{cases}
\end{equation}
where $i=0,1,\cdots,m-1$. Then $v(t,x)\geq 0$ for $(t,x)\in [0,m\omega]\times[l_1,l_2]$. Moreover, if $v(0,x)\not\equiv 0$ in $[l_1,l_2]$, then $v(t,x)>0$ for $(t,x)\in (\rho\omega,m\omega]\times(l_1,l_2)$; if $v(0,x)>0$ in $(l_1,l_2)$, then $v(t,x)>0$ for $(t,x)\in (0,m\omega]\times(l_1,l_2)$.
\end{lemma}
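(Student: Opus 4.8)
The plan is to split the time interval $[0,m\omega]$ into the alternating "bad-season" slabs $((i\omega,(i+\rho)\omega])$ and "good-season" slabs $((i+\rho)\omega,(i+1)\omega])$, and propagate positivity forward slab by slab. On a bad-season slab the inequality $v_t\geq -\delta v$ is a pointwise-in-$x$ ODE differential inequality, so by Gronwall (in the form: $\frac{d}{dt}(e^{\delta t}v)\geq 0$) we get $v(t,x)\geq e^{-\delta(t-i\omega)}v(i\omega,x)$; hence nonnegativity at $t=i\omega$ is preserved throughout $(i\omega,(i+\rho)\omega]$, and strict positivity at the left endpoint is likewise preserved. So the only substantive work is to show that the good-season nonlocal operator preserves nonnegativity and creates strict positivity.

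For a good-season slab, set $w(t,x)=e^{-\Lambda (t-(i+\rho)\omega)}v(t,x)$ with $\Lambda:=d+\|c\|_{L^\infty}$, so that $w$ satisfies $w_t \geq d\int_{l_1}^{l_2}J(x-y)w(t,y)\,dy + (c(t,x)-d-\Lambda)w \geq d\int_{l_1}^{l_2}J(x-y)w(t,y)\,dy$, where the last coefficient is $\leq 0$ by the choice of $\Lambda$ and the integral term is the key feature. I would argue by contradiction: suppose $w$ (equivalently $v$) becomes negative somewhere on the slab, and let $t^\ast$ be the first time at which $\min_{x\in[l_1,l_2]} w(t,x)=0$ is violated, attained at some $x^\ast$; then at $(t^\ast,x^\ast)$ we have $w=0$, $w_t\leq 0$, yet $w_t\geq d\int_{l_1}^{l_2}J(x^\ast-y)w(t^\ast,y)\,dy\geq 0$ — this alone is not quite a contradiction, so I sharpen it by working with $w+\varepsilon e^{t}$ for small $\varepsilon>0$, which satisfies a strict inequality and cannot touch zero, then let $\varepsilon\to 0$. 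This yields $v\geq 0$ on the whole slab, hence at its right endpoint, and induction over the $2m$ slabs gives $v\geq 0$ on $[0,m\omega]\times[l_1,l_2]$.

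For the strict positivity statements, I would use the structure of $J$: by $\mathbf{(J)}$, $J$ is continuous, even, and $J(0)>0$, so $J>0$ on a neighbourhood $(-r,r)$ of $0$. Once we know $v\geq 0$ everywhere, on any good-season slab the point $(t,x)$ with $v(t,x)=0$ and $v_t(t,x)\leq 0$ forces $\int_{l_1}^{l_2}J(x-y)v(t,y)\,dy=0$, hence $v(t,\cdot)\equiv 0$ on $(x-r,x+r)\cap[l_1,l_2]$; a covering/connectedness argument in $x$ then shows $v(t,\cdot)\equiv 0$ on all of $[l_1,l_2]$ at that time, and feeding this back through the ODE/integral equation backward in time would force $v\equiv 0$ earlier, contradicting $v(0,\cdot)\not\equiv 0$ (after noting that a zero cannot first appear: once positive somewhere it spreads). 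The two variants — $v(0,x)\not\equiv0$ giving positivity on $(\rho\omega,m\omega]\times(l_1,l_2)$ versus $v(0,x)>0$ on $(l_1,l_2)$ giving it on $(0,m\omega]\times(l_1,l_2)$ — differ only in that in the first case one must first pass through a bad season (where the ODE merely preserves, not improves, the support) before the nonlocal term on the first good-season slab can spread a positive "seed" across the whole interval, which is why positivity is only claimed after $t=\rho\omega$.

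The main obstacle is the strong-maximum-principle part: unlike the Laplacian, the operator $J\ast u - u$ is not local, so one cannot use a Hopf-lemma argument, and the propagation of strict positivity is genuinely a statement about the interaction between the finite "jump range" $r$ of $J$ and the interval length $l_2-l_1$. The delicate point to get right is the first-touching-time argument combined with the backward uniqueness needed to rule out a zero set that is nonempty but not all of $[l_1,l_2]$; I expect to handle this by combining the $\varepsilon$-perturbation trick with the observation that $\{x: v(t,\cdot)\equiv 0 \text{ near } x\}$ is both open and closed in $[l_1,l_2]$.
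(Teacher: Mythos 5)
Your proposal is correct in substance but reaches the weak maximum principle by a genuinely different route than the paper. The paper does not decompose into slabs: it sets $V=e^{kt}v$ with $k$ large enough that the zero-order coefficients $p_0=k-\delta$ and $p_1=k+c-d$ are positive, assumes $V_{\inf}<0$ on a short time window of length $T_0\le \frac{1}{2(p_0+d+p_{1,0})}$, integrates the differential inequality from $0$ to the near-minimising times to get $V_{\inf}\ge t_0(p_0+d+p_{1,0})V_{\inf}\ge\frac12 V_{\inf}$, a contradiction, and then iterates over finitely many such windows. Your slab-by-slab scheme --- Gronwall on the ODE slabs, first-touching-time plus the $\varepsilon e^{t}$ perturbation on the nonlocal slabs --- is the more classical pointwise argument and works equally well; it buys a cleaner separation of the two regimes at the cost of having to restart the argument $2m$ times, while the paper's integral argument treats both regimes uniformly. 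One small imprecision: your displayed chain $w_t\ge d\int J(x-y)w\,dy+(c-d-\Lambda)w\ge d\int J(x-y)w\,dy$ is not valid for general $w$ (the second inequality needs $w\le 0$); it is harmless because you only invoke it at the touching point of the perturbed function, where $w=-\varepsilon e^{t^\ast}<0$ and the negative coefficient actually helps, but you should not state it as an unconditional inequality. For the strong maximum principle your sketch coincides with the paper's: a zero at $(t_\ast,x_\ast)$ in a good season forces, via a boundary point of the positivity set and $J>0$ near $0$, that $v(t_\ast,\cdot)\equiv 0$ on $(l_1,l_2)$, and then integrating the inequality forward from $t=0$ (after reweighting by $e^{+kt}$ so that \emph{all} increments are nonnegative --- note this is the opposite sign of exponential from the one you used for nonnegativity) yields $v(0,\cdot)\equiv 0$, a contradiction; your ``backward uniqueness'' step should be made precise in exactly this way.
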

\begin{proof}
Let $V(t,x)=e^{kt}v(t,x)$. Then $V(0,\cdot)\geq 0$ and $V(t,x)$ satisfies
\begin{equation}\label{eq2e2}
\begin{cases}
V_t\geq p_0V(t,x), & i\omega<t\leq (i+\rho)\omega,\ x\in[l_1,l_2], \\
\displaystyle V_t\geq d\int_{l_1}^{l_2}J(x-y)V(t,y)\mathrm dy+p_1(t,x)V(t,x), & (i+\rho)\omega<t\leq (i+1)\omega,\ x\in[l_1,l_2],
\end{cases}
\end{equation}
where $p_0=k-\delta$, $p_1(t,x)=k+c(t,x)-d$. Due to the boundedness of $c$, there exists $k>0$ such that
\[ p_0>0\ \mathrm{and}\ \inf_{t\in[0,m\omega],x\in[l_1,l_2]}p_1(t,x)>0. \]
We now claim that $V(t,x)\geq 0$ in $[0,m\omega]\times [l_1,l_2]$.

Let $p_{1,0}=\sup_{t\in[0,m\omega],t\in[l_1,l_2]}p_1(t,x)$ and $T_0=\min\Big\{ m\omega,\frac{1}{2(p_0+d+p_{1,0})} \Big\}$. In the following, we will show that the claim holds for $t\in(0,T_0],x\in[l_1,l_2]$. Assume to the contrary that $V_{\mathrm{inf}}:=\inf_{t\in(0,T_0),x\in[l_1,l_2]}V(t,x)<0$. Then there exists $(t_0,x_0)\in(0,T_0]\times[l_1,l_2]$ such that $V_{\mathrm{inf}}=V(t_0,x_0)<0$. Notice that there are $t_n\in (0,t_0]$ and $x_n\in[l_1,l_2]$ such that
\[ V(t_n,x_n)\to V_{\mathrm{inf}}\ \ \mathrm{as}\ \ n\to\infty. \]
We only need to consider the following two cases.

{\it Case 1}. $t_0\in (i_0\omega,(i_0+\rho)\omega]$ for some $i_0\in\{0,1,\cdots,m-1\}$.

In this case, $t_n\in(i_0\omega,t_0]$ for large $n$. Then it follows from \eqref{eq2e2} that
\begin{align*}
V(t_n,x_n)-V(0,x_n)
&=\sum_{i=0}^{i_0-1}\left( \int_{i\omega}^{(i+\rho)\omega}V_t\mathrm dt+\int_{(i+\rho)\omega}^{(i+1)\omega}V_t\mathrm dt \right)+\int_{i_0\omega}^{t_n}V_t\mathrm dt \\
&\geq \sum_{i=0}^{i_0-1}\int_{i\omega}^{(i+\rho)\omega}p_0V(t,x_n)\mathrm dt+\int_{i_0\omega}^{t_n}p_0V(t,x_n)\mathrm dt  \\
&\phantom{=\ }+\sum_{i=0}^{i_0-1}\int_{(i+\rho)\omega}^{(i+1)\omega}\Big[ d\int_{l_1}^{l_2}J(x_n-y)V(t,y)\mathrm dy+p_1(t,x_n)V(t,x_n) \Big]\mathrm dt  \\
&\geq \int_0^{t_n}p_0V_{\mathrm{inf}}\mathrm dt+d\int_0^{t_n}\int_{l_1}^{l_2}J(x_n-y)V_{\mathrm{inf}}\mathrm dy\mathrm dt+\int_0^{t_n}p_{1,0}V_{\mathrm{inf}}\mathrm dt  \\
&\geq t_n(p_0+d+p_{1,0})V_{\mathrm{inf}}  \\
&\geq t_0(p_0+d+p_{1,0})V_{\mathrm{inf}}
\end{align*}
for large $n$. Recall that $V(0,x_n)\geq 0$ for $n=0,1,2,\cdots$. Thus we have
\[ V(t_n,x_n)\geq t_0(p_0+d+p_{1,0})V_{\mathrm{inf}} \]
for large $n$. Taking the limit as $n\to \infty$, it holds that
\[ V_{\mathrm{inf}}\geq t_0(p_0+d+p_{1,0})V_{\mathrm{inf}}\geq \frac{1}{2}V_{\mathrm{inf}}, \]
which is a contradiction.

{\it Case 2}. $t_0\in ((i_0+\rho)\omega,(i_0+1)\omega]$ for some $i_0\in\{0,1,\cdots,m-1\}$.

Similarly, we can also derive a contradiction since
\begin{align*}
V(t_n,x_n)-V(0,x_n)
&=\sum_{i=0}^{i_0-1}\left( \int_{i\omega}^{(i+\rho)\omega}V_t\mathrm dt+\int_{(i+\rho)\omega}^{(i+1)\omega}V_t\mathrm dt \right)+\int_{i_0\omega}^{(i_0+\rho)\omega}V_t\mathrm dt+\int_{(i_0+\rho)\omega}^{t_n}V_t\mathrm dt \\
&\geq t_n(p_0+d+p_{1,0})V_{\mathrm{inf}}\geq t_0(p_0+d+p_{1,0})V_{\mathrm{inf}}
\end{align*}
for large $n$. Therefore, $V(t,x)\geq 0$ for $(t,x)\in (0,T_0]\times[l_1,l_2]$ and then $v(t,\cdot)\geq 0$ for $t\in [0,T_0]$.

If $T_0=m\omega$, then $v(t,x)\geq 0$ in $[0,m\omega]\times[l_1,l_2]$ follows directly; while if $T_0<m\omega$, we can repeat the above process by replacing $V(0,\cdot)$ and $(0,T_0]$ as $V(T_0,\cdot)$ and $(T_0,m\omega]$. Obviously, this process can be repeated in finite many times, and consequently, $v(t,\cdot)\geq 0$ for $t\in [0,m\omega]$.

Now we assume that $v(0,x)\not\equiv 0$ in $[l_1,l_2]$. To finish the proof, it suffices to prove that $V>0$ in $(\rho\omega,\omega]\times(l_1,l_2)$. Suppose that there exists a point $(t_*,x_*)\in(\rho\omega,\omega]\times(l_1,l_2)$ such that $V(t_*,x_*)=0$.

First, we prove that
\[ V(t_*,x)=0\ \ \mathrm{for}\ \ x\in(l_1,l_2). \]
Otherwise, we can find
\[ \tilde x\in[l_1,l_2]\cap\partial\{ x\in(l_1,l_2):V(t_*,x)>0 \}. \]
Then $V(t_*,\tilde x)=0$ and it follows from \eqref{eq2e2} that
\[ 0\geq V_t(t_*,\tilde x)\geq d\int_{l_1}^{l_2}J(\tilde x-y)V(t_*,y)\mathrm dy>0, \]
by assumption ${\bf (J)}$. This is impossible, and hence $V(t_*,x)=0$ for $x\in(l_1,l_2)$. Thus, we can derive from \eqref{eq2e2} that for $x\in[l_1,l_2]$
\begin{align*}
-V(0,x)&=V(t_*,x)-V(0,x)=\int_0^{\rho\omega}V_t\mathrm dt+\int_{\rho\omega}^{t_*}V_t\mathrm dt  \\
&\geq p_0\int_0^{\rho\omega}V(t,x)\mathrm dt+d\int_{\rho\omega}^{t_*}\int_{l_1}^{l_2}J(x-y)V(t,y)\mathrm dy\mathrm dt+\int_{\rho\omega}^{t_*}p_1(t,x)V(t,x)\mathrm dt\geq 0.
\end{align*}
This means that $v(0,x)\equiv 0$ in $[l_1,l_2]$, which is a contradiction.
\end{proof}

\begin{lemma}[Existence and uniqueness]\label{lem3.2}
Assume that ${\bf (J)}$ holds and $-\infty<l_1<l_2<+\infty$. Then for any nonnegative and bounded initial value $u_0(x)\in C([l_1,l_2])$, problem \eqref{eq1.1} admits a unique global solution $u\in C^{1,0}((i\omega,(i+\rho)\omega]\times[l_1,l_2])\cap C^{1,0}(((i+\rho)\omega,(i+1)\omega]\times[l_1,l_2])$ for $i\in \mathbb Z_+$. Moreover, $u(t,x)>0$ for $t>0$ and $x\in(l_1,l_2)$, if $u_0(x)>0$ in $(l_1,l_2)$.
\end{lemma}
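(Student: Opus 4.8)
The plan is to construct $u$ piecewise in time, alternating between the explicitly solvable bad--season ODE and the good--season nonlocal dispersal equation, and to patch the pieces together using Lemma \ref{lem3.1} for positivity and comparison. On any interval $(i\omega,(i+\rho)\omega]$, \eqref{eq1.1} is the linear ODE $u_t=-\delta u$ with $x$ entering only as a parameter; given $u(i\omega,\cdot)=\phi\in C([l_1,l_2])$ its unique solution is $u(t,x)=\phi(x)\,e^{-\delta(t-i\omega)}$, which belongs to $C^{1,0}((i\omega,(i+\rho)\omega]\times[l_1,l_2])$, is nonnegative when $\phi\ge 0$, and is positive on $(l_1,l_2)$ when $\phi$ is; so no existence issue arises on the bad seasons.

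On a good season $((i+\rho)\omega,(i+1)\omega]$ I write $(\mathcal Au)(x):=d\big(\int_{l_1}^{l_2}J(x-y)u(y)\,\mathrm dy-u(x)\big)$. By ${\bf (J)}$ this is a bounded linear operator on $X:=C([l_1,l_2])$, so it generates a uniformly continuous semigroup $\{e^{t\mathcal A}\}_{t\ge 0}$ on $X$, while $f(u)=u(a-bu)$ is locally Lipschitz on $X$. The variation--of--constants formula together with the contraction mapping principle (see \cite{P83}) then produces a unique mild solution on a maximal interval $((i+\rho)\omega,T_{\max})$, which is a classical $C^{1,0}$ solution there since $t\mapsto \mathcal Au(t)+f(u(t))$ is continuous. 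To see that $T_{\max}\ge(i+1)\omega$ I need an a priori bound. On any $[(i+\rho)\omega,T']$ with $T'<T_{\max}$ the solution $u$ is bounded, hence $c(t,x):=a-bu(t,x)\in L^{\infty}$, and since $u_t=\mathcal Au+cu$ and the starting datum is nonnegative, Lemma \ref{lem3.1} gives $u\ge 0$ there. Let $\bar u$ be the spatially homogeneous function with $\bar u((i+\rho)\omega)=K_i:=\max\{\|u((i+\rho)\omega,\cdot)\|_{\infty},a/b\}$ solving the logistic ODE $\bar u'=\bar u(a-b\bar u)$; then $0<\bar u(t)\le K_i$ for all later $t$, and because $\int_{l_1}^{l_2}J(x-y)\,\mathrm dy\le\int_{\mathbb R}J=1$ we have $\mathcal A\bar u\le 0$, so $\bar u$ is a supersolution of the good--season equation. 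Applying Lemma \ref{lem3.1} to $v=\bar u-u$ (whose coefficient $a-b(\bar u+u)$ is in $L^{\infty}$) yields $u\le\bar u\le K_i$ on $[(i+\rho)\omega,T']$; as $K_i$ is independent of $T'$, the standard continuation criterion forces $T_{\max}\ge(i+1)\omega$.

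Alternating the two constructions and inducting on $i\in\mathbb Z_+$ — each new piece taking as its initial datum the terminal value of the previous one — produces a solution on $[0,\infty)$ that is $C^{1,0}$ on each open season and continuous in $t$ across every switching time; uniqueness on each piece (from the fixed--point argument and from uniqueness for the ODE) gives uniqueness of the global solution, and the $L^{\infty}$ bound propagates season to season. If in addition $u_0>0$ on $(l_1,l_2)$, then on the first good season $u$ solves $u_t=\mathcal Au+c(t,x)u$ with $c=a-bu\in L^{\infty}$, so the strong maximum principle part of Lemma \ref{lem3.1} gives $u>0$ on $(l_1,l_2)$; the bad--season exponential preserves strict positivity and Lemma \ref{lem3.1} propagates it through every subsequent good season, whence $u(t,x)>0$ for all $t>0$, $x\in(l_1,l_2)$.

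The one genuinely delicate point is the a priori bound: the comparison function must be taken \emph{constant in $x$}, precisely so that the truncated kernel mass $\int_{l_1}^{l_2}J(x-y)\,\mathrm dy-1$ has the favourable sign on the bounded habitat (this is where the bounded domain helps rather than hurts). Once that bound is in hand, global existence, uniqueness, $C^{1,0}$ regularity and positivity all follow from Lemma \ref{lem3.1} and the elementary semigroup theory of the bounded operator $\mathcal A$.
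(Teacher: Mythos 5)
Your proposal is correct and follows essentially the same route as the paper: explicit exponential decay on each bad season, Banach fixed point (equivalently, variation of constants for the bounded nonlocal operator) plus the maximum principle and comparison with the constant $\max\{a/b,\|u_0\|_\infty\}$ on each good season, then induction over periods. The only difference is that you spell out the a priori bound via the spatially constant logistic supersolution (using $\int_{l_1}^{l_2}J(x-y)\,\mathrm dy\le 1$), a detail the paper delegates to a citation.
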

\begin{proof}
At first, we set
\[ \hat u=e^{-\delta t}u_0(x) \]
for $t\in[0,\rho\omega]$. Then $\hat u\in C^{1,0}((0,\rho\omega]\times[l_1,l_2])$ satisfies
\[\begin{cases}
\hat u_t=-\delta \hat u, & 0<t\leq \rho\omega,\ l_1\leq x\leq l_2, \\
\hat u(0,x)=u_0(x), & l_1\leq x\leq l_2.
\end{cases}\]
Consider the following problem
\begin{equation}\label{eq2e3}
\begin{cases}
\displaystyle u_t=d\int_{l_1}^{l_2}J(x-y)u(t,y)\mathrm dy-du(t,x)+u(a-bu), & \rho\omega<t\leq \omega,\ x\in (l_1,l_2), \\
u(\rho\omega,x)=e^{-\delta \rho\omega}u_0(x), & x\in[l_1,l_2].
\end{cases}
\end{equation}
Then one can apply the Banach's fixed theorem and comparison argument (see \cite{AMRT10}) to conclude that \eqref{eq2e3} has a unique solution $\bar u(t,x)\in C^{1,0}((\rho\omega,\omega]\times[l_1,l_2])$. Moreover, by the Maximum principle and comparison argument, we have that
\[ 0<\bar u(t,x)\leq \max\left\{ \frac{a}{b},\max_{-h_0\leq x\leq h_0}u_0(x) \right\}\ \mathrm{for}\ t\in (\rho\omega,\omega], x\in(l_1,l_2). \]
Define
\[ u(t,x)=\left\{\begin{aligned}
&\hat u(t,x)\ \mathrm{in}\ [0,\rho\omega]\times[l_1,l_2], \\
&\bar u(t,x)\ \mathrm{in}\ [\rho\omega,\omega]\times[l_1,l_2].
\end{aligned}\right. \]
We have that $u\in \hat u\in C^{1,0}((0,\rho\omega]\times[l_1,l_2])\cap C^{1,0}((\rho\omega,\omega]\times[l_1,l_2])$.

Based on the above obtained function $u$, we let
\[ u_1(t,x)=e^{-\delta(t-\omega)}u(\omega,x) \]
for $\omega\leq t\leq (1+\rho)\omega$. Then $u_1\in C^{1,0}((\omega,(1+\rho)\omega]\times[l_1,l_2])$ satisfies
\[\begin{cases}
u_{1,t}=-\delta u_1, & \omega<t\leq (1+\rho)\omega,\ l_1\leq x\leq l_2, \\
u_1(\omega,x)=u(\omega,x), & l_1\leq x\leq l_2.
\end{cases}\]
Likewise, the nonlocal dispersal problem
\[ \begin{cases}
\displaystyle u_t=d\int_{l_1}^{l_2}J(x-y)u(t,y)\mathrm dy-du(t,x)+u(a-bu), & (1+\rho)\omega<t\leq 2\omega,\ x\in (l_1,l_2), \\
u((1+\rho)\omega,x)=e^{-\delta \rho\omega}u(\omega,x), & x\in[l_1,l_2]
\end{cases} \]
has a unique solution $\bar u_1\in C^{1,0}(((1+\rho)\omega,2\omega]\times[l_1,l_2])$, in which
\[ 0<\bar u_1(t,x)\leq \max\left\{ \frac{a}{b},\max_{-h_0\leq x\leq h_0}u_0(x) \right\}\ \mathrm{for}\ t\in ((1+\rho)\omega,2\omega], x\in(l_1,l_2). \]
Define
\[ u(t,x)=\left\{\begin{aligned}
&u(t,x)\ \mathrm{in}\ [0,\omega]\times[l_1,l_2], \\
&u_1(t,x)\ \mathrm{in}\ [\omega,(1+\rho)\omega]\times[l_1,l_2], \\
&\bar u_1(t,x)\ \mathrm{in}\ [(1+\rho)\omega,2\omega]\times[l_1,l_2].
\end{aligned}\right. \]
Then it holds that $u\in C^{1,0}((i\omega,(i+\rho)\omega]\times[l_1,l_2])\cap C^{1,0}(((i+\rho)\omega,(i+1)\omega]\times[l_1,l_2])$ for $i=0,1$.

By repeating the above procedure, we therefore obtain the existence and uniqueness of the solution $(u,g,h)$ of \eqref{eq1.1}.
\end{proof}

\section{Global dynamics\label{sec3}}
\setcounter{equation}{0}

In this subsection, we first establish the periodic upper-lower solutions method for model \eqref{eq1.1}. Using this method, we can consider the long time behavior of model \eqref{eq1.1}.

\subsection{The method of periodic upper-lower solutions}

Following Hess \cite{He91}, we can define the upper-lower solutions of \eqref{eq1.6} as follows.
\begin{definition}\label{def3.1}
A bounded and continuous function $\tilde u(t,x)$ is called an upper-solution of \eqref{eq1.6} if $\tilde u(t,x)\in C^{1,0}((i\omega,(i+\rho)\omega]\times[l_1,l_2])\cap C^{1,0}(((i+\rho)\omega,(i+1)\omega]\times[l_1,l_2])$ satisfies
\begin{equation}\label{eq3.1}
\begin{cases}
\tilde u_t\geq -\delta \tilde u, & i\omega<t\leq (i+\rho)\omega,\ l_1\leq x\leq l_2, \\
\displaystyle \tilde u_t\geq d\int_{l_1}^{l_2}J(x-y)\tilde u(t,y)\mathrm dy-d\tilde u(t,x)+\tilde u(a-b\tilde u), & (i+\rho)\omega<t\leq (i+1)\omega,\ l_1\leq x\leq l_2, \\
\tilde u(0,x)\geq \tilde u(\omega,x), & x\in[l_1,l_2].
\end{cases}
\end{equation}
for $i\in \mathbb Z_+$. Meanwhile, the function $\hat u(t,x)\in C^{1,0}((i\omega,(i+\rho)\omega]\times[l_1,l_2])\cap C^{1,0}(((i+\rho)\omega,(i+1)\omega]\times[l_1,l_2])$ is called a lower-solution of \eqref{eq1.6} if the inequalities in \eqref{eq3.1} are reversed.
\end{definition}

Similarly, we can define the upper-solution (resp. lower-solution) of \eqref{eq1.1} by replacing the inequality $\tilde u(0,x)\geq \tilde u(\omega,x)$ in \eqref{eq3.1} as $\tilde u(0,x)\geq \tilde u_0(x)$ (resp. $\hat u(0,x)\leq \hat u_0(x)$). We say that a pair of upper-lower solution $\tilde u$ and $\hat u$ are ordered if $\tilde u(t,x)\geq \hat u(t,x)$ in $[0,+\infty)\times [l_1,l_2]$.

Using the semigroup theory, we have the following result.
\begin{lemma}\label{lem3.4}
Let $D(t),\bar a(t)$ and $\bar b(t)$ be defined as in \eqref{eq1.2}. Assume that $u(t,x)$ is bounded for $(t,x)\in[0,+\infty]\times [l_1,l_2]$. Then $u(t,x)$ is a solution of \eqref{eq1.1} if and only if
\begin{equation}\label{eq3.2}
\begin{aligned}
\displaystyle u(t,x)&=u(0,x)+\int_0^t\bigg[D(s)\bigg( \int_{l_1}^{l_2}J(x-y)u(s,y)\mathrm dy-u(s,x) \bigg) \\
\displaystyle &\phantom{=}+u(s,x)[\bar a(s)-\bar b(s)u(s,x)] \bigg] \mathrm ds, & t>0,\ x\in[l_1,l_2].
\end{aligned}
\end{equation}
\end{lemma}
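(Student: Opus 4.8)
The plan is to show the equivalence between the piecewise-defined classical solution of \eqref{eq1.1} and the integral formulation \eqref{eq3.2} by a direct argument, exploiting the fact that on each subinterval of the form $(i\omega,(i+\rho)\omega]$ or $((i+\rho)\omega,(i+1)\omega]$ the coefficients $D(s),\bar a(s),\bar b(s)$ are constant, so that the standard relationship between a $C^{1,0}$-in-time solution and its time-integrated version holds there. First I would assume $u$ is a solution of \eqref{eq1.1} in the sense of Lemma \ref{lem3.2}, i.e. $u\in C^{1,0}((i\omega,(i+\rho)\omega]\times[l_1,l_2])\cap C^{1,0}(((i+\rho)\omega,(i+1)\omega]\times[l_1,l_2])$ and continuous in $t$ on $[0,\infty)$. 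On each such subinterval the integrand
\[
F(s,x):=D(s)\Big(\int_{l_1}^{l_2}J(x-y)u(s,y)\,\mathrm dy-u(s,x)\Big)+u(s,x)\big[\bar a(s)-\bar b(s)u(s,x)\big]
\]
is continuous in $s$ and equals $u_t(s,x)$ by \eqref{eq1.1} together with the definitions \eqref{eq1.2}; integrating $u_t=F$ over each subinterval, using the continuity of $u$ in $t$ at the junction points $i\omega$ and $(i+\rho)\omega$ to telescope the contributions, and summing from $0$ to $t$ yields \eqref{eq3.2}.

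Conversely, I would assume $u$ is bounded, continuous, and satisfies \eqref{eq3.2}. Boundedness of $u$ and assumption ${\bf (J)}$ (so that $y\mapsto J(x-y)$ is bounded and $\int_{l_1}^{l_2}J(x-y)u(s,y)\,\mathrm dy$ is continuous in $s$ by dominated convergence) imply that $s\mapsto F(s,x)$ is bounded and piecewise continuous, with possible jump discontinuities only at the points $i\omega$ and $(i+\rho)\omega$ where $D,\bar a,\bar b$ jump. Hence the map $t\mapsto\int_0^t F(s,x)\,\mathrm ds$ is continuous on $[0,\infty)$ and continuously differentiable on the interior of each subinterval, with derivative $F(t,x)$ there; by \eqref{eq3.2}, $u(\cdot,x)$ inherits this regularity, so $u\in C^{1,0}$ on each subinterval and $u$ is continuous in $t$. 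Plugging the definitions \eqref{eq1.2} back in, $u_t=F$ becomes precisely the two differential equations in \eqref{eq1.1} on the respective subintervals, and setting $t=0$ in \eqref{eq3.2} gives $u(0,x)=u_0(x)$; thus $u$ solves \eqref{eq1.1}.

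The main obstacle, and the only point that needs care, is the handling of the discontinuities of $D,\bar a,\bar b$ at $t=i\omega$ and $t=(i+\rho)\omega$: one must verify that the integral formulation does not force any regularity of $u_t$ across these junctions (indeed $u_t$ will generally jump there, matching the $C^{1,0}$-on-each-piece statement rather than global $C^{1,0}$), and that continuity of $u$ in $t$ at the junctions is exactly what makes the telescoping in the forward direction valid. A secondary technical point is confirming the continuity in $s$ of the convolution term $\int_{l_1}^{l_2}J(x-y)u(s,y)\,\mathrm dy$, which follows from ${\bf (J)}$, the boundedness of $u$, and dominated convergence, uniformly in $x\in[l_1,l_2]$ since $[l_1,l_2]$ is compact and $J$ is uniformly continuous on compact sets. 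Everything else is the classical fundamental-theorem-of-calculus argument applied piece by piece.
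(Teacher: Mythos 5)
Your argument is correct, but it is a genuinely different route from the one in the paper. The paper starts from the semigroup (variation-of-constants) representation
\begin{equation*}
u(t,x)=e^{-t}u(0,x)+\int_0^te^{-(t-s)}\Big[D(s)\big(J*u-u\big)(s,x)+u(s,x)+u(s,x)[\bar a(s)-\bar b(s)u(s,x)]\Big]\,\mathrm ds,
\end{equation*}
which it takes as the meaning of ``solution'' coming from \cite{P83}, then derives an identity for $\int_0^t u(s,x)\,\mathrm ds$ and uses an exchange of the order of integration (Fubini) to pass between that mild formulation and the plain integrated form \eqref{eq3.2}. You instead work directly with the classical piecewise formulation from Lemma \ref{lem3.2}: on each subinterval where $D,\bar a,\bar b$ are constant you apply the fundamental theorem of calculus and telescope across the junction points using continuity of $u$ in $t$; for the converse you bootstrap regularity from the integral equation (boundedness of the integrand gives Lipschitz continuity in $t$, hence continuity of the integrand on each open subinterval, hence differentiability there). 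Your approach is more elementary and, in my view, more transparent about where the discontinuities of the coefficients matter; the paper's approach has the advantage of tying the lemma directly to the semigroup definition of solution used to establish local existence. One small point worth making explicit in your converse direction: the lemma only assumes $u$ bounded, so you should note that continuity of $u$ in $t$ is not an extra hypothesis but follows from \eqref{eq3.2} itself (the right-hand side is Lipschitz in $t$ once the integrand is bounded), and that continuity in $x$ is inherited from $u_0$ and the continuity of $J$; the paper glosses over both points as well.
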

\begin{proof}
It follows from the semigroup method \cite{P83}, we have that
\begin{equation}\label{eq3.3}
\begin{aligned}
&u(t,x)=e^{-t}u(0,x) \\
&\displaystyle \phantom{==}+\int_0^te^{-(t-s)}\bigg[D(s)\bigg( \int_{l_1}^{l_2}J(x-y)u(s,y)\mathrm dy-u(s,x) \bigg)+u(s,x)+u(s,x)[\bar a(s)-\bar b(s)u(s,x)] \bigg] \mathrm ds,  \\
\end{aligned}
\end{equation}
which implies
\begin{equation}\label{eq3.4}
\int_0^tu(s,x)\mathrm ds=(1-e^{-t})u(0,x)+I_1[u](t,x),
\end{equation}
where
\begin{align*}
I_1[u](t,x)&=\int_0^t\int_0^se^{-(s-z)}\bigg[D(z)\bigg( \int_{l_1}^{l_2}J(x-y)u(z,y)\mathrm dy-u(z,x) \bigg)+u(z,x)[1+\bar a(z)-\bar b(z)u(z,x)] \bigg] \mathrm dz\mathrm ds  \\
&=\int_0^t\int_z^te^{-(s-z)}\bigg[D(z)\bigg( \int_{l_1}^{l_2}J(x-y)u(z,y)\mathrm dy-u(z,x) \bigg)+u(z,x)[1+\bar a(z)-\bar b(z)u(z,x)] \bigg] \mathrm ds\mathrm dz  \\
&\phantom{=\ }+\int_0^t(1-e^{s-t})\bigg[D(s)\bigg( \int_{l_1}^{l_2}J(x-y)u(s,y)\mathrm dy-u(s,x) \bigg)+u(s,x)[1+\bar a(s)-\bar b(s)u(s,x)] \bigg] \mathrm ds.
\end{align*}
Therefore, \eqref{eq3.2} can be derived from \eqref{eq3.3} and \eqref{eq3.4}. On the other hand, if $u$ satisfies \eqref{eq3.2}, then we can also show that \eqref{eq3.3} holds.
\end{proof}

Similarly, we have the following result for \eqref{eq1.6}.
\begin{lemma}\label{lem3.5}
Assume that $u(t,x)$ is bounded for $(t,x)\in[0,+\infty]\times [l_1,l_2]$. Then $u(t,x)$ is a solution of \eqref{eq1.6} if and only if
\begin{equation}\label{eq3.5}
\begin{cases}
\displaystyle u(t,x)=u(0,x)+\int_0^t\bigg[D(s)\bigg( \int_{l_1}^{l_2}J(x-y)u(s,y)\mathrm dy-u(s,x) \bigg) & \\
\displaystyle \phantom{=====}+u(s,x)[\bar a(s)-\bar b(s)u(s,x)] \bigg] \mathrm ds, & t>0,\ x\in[l_1,l_2], \\
u(t,x)=u(t+\omega,x), & t\geq 0,\ x\in[l_1,l_2].
\end{cases}
\end{equation}
\end{lemma}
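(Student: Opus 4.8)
The statement to prove is Lemma~\ref{lem3.5}, the integral (mild) formulation characterizing bounded solutions of the periodic problem \eqref{eq1.6}.

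\textbf{Overall approach.} The plan is to reduce Lemma~\ref{lem3.5} to Lemma~\ref{lem3.4}, which has already been established for the Cauchy problem \eqref{eq1.1}. The point is that \eqref{eq1.6} is exactly \eqref{eq1.1} supplemented by the periodicity constraint $u(t,x)=u(t+\omega,x)$ (and with the reaction/diffusion coefficients repackaged via $D(t),\bar a(t),\bar b(t)$ as in \eqref{eq1.2}, which is precisely the form appearing in \eqref{eq3.2}). So a bounded function $u$ is a solution of \eqref{eq1.6} if and only if it is a solution of \eqref{eq1.1} with initial datum $u_0(x):=u(0,x)$ \emph{and} it satisfies $u(t,x)=u(t+\omega,x)$ for all $t\ge 0$, $x\in[l_1,l_2]$. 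Applying Lemma~\ref{lem3.4} to the first condition converts it to the integral identity \eqref{eq3.2}, which is the first line of \eqref{eq3.5}; the periodicity condition is carried over verbatim as the second line of \eqref{eq3.5}. This gives both implications at once.

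\textbf{Key steps, in order.} First I would note that, by the definitions \eqref{eq1.2} of $D,\bar a,\bar b$, the PDE system in \eqref{eq1.6} on $(i\omega,(i+\rho)\omega]$ and on $((i+\rho)\omega,(i+1)\omega]$ is identical to \eqref{eq1.3}, hence to \eqref{eq1.1}; so ``$u$ solves the PDE part of \eqref{eq1.6}'' and ``$u$ solves \eqref{eq1.1} with $u_0=u(0,\cdot)$'' are literally the same assertion about the regularity class $C^{1,0}((i\omega,(i+\rho)\omega]\times[l_1,l_2])\cap C^{1,0}(((i+\rho)\omega,(i+1)\omega]\times[l_1,l_2])$. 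Second, assuming $u$ is a bounded solution of \eqref{eq1.6}: it solves \eqref{eq1.1}, so Lemma~\ref{lem3.4} yields \eqref{eq3.2}, i.e. the first line of \eqref{eq3.5}; and the second line of \eqref{eq3.5} is just the periodicity that is part of the definition of \eqref{eq1.6}. Third, conversely, assume $u$ is bounded and satisfies \eqref{eq3.5}: the first line is \eqref{eq3.2}, so the ``if'' direction of Lemma~\ref{lem3.4} gives that $u$ solves \eqref{eq3.3} and hence \eqref{eq1.1} (one should remark that the equivalence in Lemma~\ref{lem3.4} also delivers the piecewise $C^{1,0}$ regularity, via differentiating \eqref{eq3.3} on each subinterval, since the integrand is continuous there); combined with the second line of \eqref{eq3.5}, which supplies the periodicity, $u$ is a solution of \eqref{eq1.6}. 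I would write this out symmetrically so that both directions are visibly ``apply Lemma~\ref{lem3.4}, then append periodicity.''

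\textbf{Main obstacle.} The only genuinely delicate point is bookkeeping about regularity and the meaning of ``solution'': one must be careful that the integral equation \eqref{eq3.2} with a bounded $u$ automatically produces a function that is $C^{1,0}$ on each of the open-closed subintervals $(i\omega,(i+\rho)\omega]$ and $((i+\rho)\omega,(i+1)\omega]$ (with possible jumps in $u_t$ only at the switching times $i\omega$ and $(i+\rho)\omega$, coming from the discontinuities of $D,\bar a,\bar b$), and that no spurious continuity across those switching times is being silently assumed beyond continuity of $u$ itself — which is fine because $u$, being given by \eqref{eq3.2} with a bounded integrand, is Lipschitz in $t$ hence continuous. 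This is exactly the same issue already handled inside the proof of Lemma~\ref{lem3.4} (and in Lemma~\ref{lem3.2}), so it is not a new difficulty, merely one to state cleanly; everything else is a direct transcription.
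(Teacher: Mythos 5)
Your proposal is correct and matches the paper's intent: the paper gives no separate proof for Lemma \ref{lem3.5}, stating only that it follows ``similarly'' from Lemma \ref{lem3.4}, and your explicit reduction --- bounded solutions of \eqref{eq1.6} are exactly bounded solutions of \eqref{eq1.1} with $u_0=u(0,\cdot)$ that are additionally $\omega$-periodic, so \eqref{eq3.2} plus periodicity is \eqref{eq3.5} --- is precisely that argument made precise. Your remarks on the piecewise $C^{1,0}$ regularity at the switching times are the right bookkeeping and consistent with what is done in the proof of Lemma \ref{lem3.4}.
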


\begin{corollary}\label{cor3.6}
Assume that $u(t,x)$ is bounded for $(t,x)\in[0,+\infty]\times [l_1,l_2]$. Then $u(t,x)$ is a solution of \eqref{eq1.6} if and only if
\begin{equation}\label{eq3.6}
\begin{cases}
\displaystyle u(t,x)=e^{-Ct}u(0,x)+\int_0^te^{-C(t-s)}\bigg[D(s)\bigg( \int_{l_1}^{l_2}J(x-y)u(s,y)\mathrm dy-u(s,x) \bigg) & \\
\displaystyle \phantom{=====}+u(s,x)[C+\bar a(s)-\bar b(s)u(s,x)] \bigg] \mathrm ds, & t>0,\ x\in[l_1,l_2], \\
u(t,x)=u(t+\omega,x), & t\geq 0,\ x\in[l_1,l_2],
\end{cases}
\end{equation}
where $C$ is a constant.
\end{corollary}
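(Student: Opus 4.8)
The plan is to deduce Corollary \ref{cor3.6} directly from Lemma \ref{lem3.5}, exploiting the fact that the only difference between \eqref{eq3.5} and \eqref{eq3.6} is that a pure integral equation has been rewritten as a variation-of-constants formula with an arbitrary relaxation constant $C$. Concretely, I would show that the integral identity in \eqref{eq3.5} is equivalent to the integral identity in \eqref{eq3.6}; the periodicity condition $u(t,x)=u(t+\omega,x)$ is identical in both formulations and so needs no further discussion.

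For the forward direction, suppose $u$ is bounded and satisfies \eqref{eq3.5}. Write $F[u](s,x):=D(s)\big(\int_{l_1}^{l_2}J(x-y)u(s,y)\,\mathrm dy-u(s,x)\big)+u(s,x)[\bar a(s)-\bar b(s)u(s,x)]$, so that \eqref{eq3.5} reads $u(t,x)=u(0,x)+\int_0^t F[u](s,x)\,\mathrm ds$. Fix $x$ and set $\phi(t):=u(t,x)$; then $\phi$ is absolutely continuous (indeed $C^1$ on each season subinterval, with the right regularity guaranteed by Lemma \ref{lem3.2}) with $\phi'(t)=F[u](t,x)$ for a.e.\ $t$. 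Multiplying the ODE $\phi'(t)=F[u](t,x)$ by the integrating factor $e^{Ct}$ gives $\frac{\mathrm d}{\mathrm dt}\big(e^{Ct}\phi(t)\big)=e^{Ct}\big(C\phi(t)+F[u](t,x)\big)$. Integrating from $0$ to $t$ and dividing by $e^{Ct}$ yields exactly the identity in \eqref{eq3.6}, after noting that $C\,u(s,x)+F[u](s,x)$ is precisely $D(s)\big(\int_{l_1}^{l_2}J(x-y)u(s,y)\,\mathrm dy-u(s,x)\big)+u(s,x)[C+\bar a(s)-\bar b(s)u(s,x)]$. The converse runs the same computation in reverse: differentiating the right-hand side of \eqref{eq3.6} in $t$ (legitimate on each smoothness interval, with matching one-sided derivatives at the switching times $t=(i+\rho)\omega$ and $t=(i+1)\omega$ since $u$ is continuous in $t$) one recovers $\phi'(t)=F[u](t,x)$, hence by integration the identity in \eqref{eq3.5}; alternatively, one simply reverses the integrating-factor manipulation algebraically. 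Either way, combining with the unchanged periodicity condition gives the equivalence.

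The only genuinely delicate point is the regularity bookkeeping at the seasonal switching times: $u$ is merely continuous across $t=(i+\rho)\omega$ and $t=(i+1)\omega$, not $C^1$ there, so the pointwise differentiation argument must be run on each open season subinterval and then patched together using continuity of $u$ and the absolute continuity of $t\mapsto\int_0^t F[u](s,x)\,\mathrm ds$. This is exactly the same situation already handled in Lemma \ref{lem3.4} and Lemma \ref{lem3.5}, so I would simply invoke the integrating-factor identity at the level of integral equations — multiply \eqref{eq3.5} by nothing and instead verify directly that \eqref{eq3.6} follows from \eqref{eq3.5} by an integration-by-parts / Fubini argument on the double integral $\int_0^t Ce^{-C(t-s)}\big(\int_0^s F[u](z,x)\,\mathrm dz + u(0,x)\big)\mathrm ds$, which avoids differentiating $u$ altogether. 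I expect this Fubini-type manipulation, rather than any conceptual difficulty, to be the main (and only) obstacle, and it is routine. The proof is therefore short: substitute \eqref{eq3.5} into the right-hand side of \eqref{eq3.6}, swap the order of integration in the resulting iterated integral, evaluate the inner $s$-integral of $Ce^{-C(t-s)}$ explicitly, and simplify to recover the left-hand side $u(t,x)$; the reverse implication is the same computation read backwards.
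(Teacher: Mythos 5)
Your argument is correct and is essentially the route the paper takes: the paper treats Corollary \ref{cor3.6} as an immediate analogue of Lemmas \ref{lem3.4}--\ref{lem3.5}, whose proof is exactly your Fubini/exchange-of-integration-order computation relating the plain integral identity to the variation-of-constants form (with $e^{-t}$ there in place of $e^{-Ct}$). Your decision to work at the level of integral identities rather than differentiating $u$ is the right one, since $u$ is only assumed bounded and the seasonal switching times preclude global $C^1$ regularity.
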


The following conclusion establishes a method of periodic upper-lower solutions.
\begin{theorem}\label{thm3.7}
Assume that ${\bf (J)}$ holds and $u_0(x)\in C([l_1,l_2])$ is bounded. Let $u(t,x)$ be the unique solution to \eqref{eq1.1} and $\tilde u(t,x),\hat u(t,x)$ be a pair of ordered and bounded upper-lower solutions to \eqref{eq1.6} satisfying
\[ \hat u(0,x)\leq u_0(x)\leq \tilde u(0,x)\ \ \mathrm{on}\ [l_1,l_2]. \]
Then the time periodic problem \eqref{eq1.6} admits a minimal solution $\underline u\in C^{1,0}((i\omega,(i+\rho)\omega]\times[l_1,l_2])\cap C^{1,0}(((i+\rho)\omega,(i+1)\omega]\times[l_1,l_2])$ and a maximal solution $\overline u\in C^{1,0}((i\omega,(i+\rho)\omega]\times[l_1,l_2])\cap C^{1,0}(((i+\rho)\omega,(i+1)\omega]\times[l_1,l_2])$ $(i\in Z_+)$ satisfying
\[ \hat u(t,x)\leq \underline u(t,x)\leq \varliminf_{n\to\infty}u(t+n\omega,x)\leq \varlimsup_{n\to\infty}u(t+n\omega,x)\leq \overline u(t,x)\leq \tilde u(t,x)\ \ \mathrm{for}\ t\geq 0,x\in[l_1,l_2]. \]
\end{theorem}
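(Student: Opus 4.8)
The plan is to build the maximal and minimal periodic solutions by monotone iteration, using the upper solution $\tilde u$ and lower solution $\hat u$ as the seeds of two monotone sequences, and then to sandwich the solution $u(t,x;u_0)$ of \eqref{eq1.1} between them via the comparison principle. The first step is to fix a constant $C>0$ large enough that the map $s\mapsto s[C+\bar a(t)-\bar b(t)s]$ is nondecreasing on the order interval $[\min\hat u,\max\tilde u]$ (possible because $\bar a,\bar b$ are bounded and $\tilde u,\hat u$ are bounded), so that the integral operator appearing on the right-hand side of \eqref{eq3.6} in Corollary \ref{cor3.6} is monotone in $u$. Concretely, define $\mathcal F[u](t,x)$ to be the right-hand side of the first line of \eqref{eq3.6}; then $\mathcal F$ is order-preserving with respect to the pointwise order on bounded continuous functions, because the kernel $J$ is nonnegative and $C$ was chosen to make the reaction part monotone.

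Next I would set up the iteration on one period. Starting from $\overline u^{(0)}:=\tilde u$, define $\overline u^{(k+1)}$ to solve the linear nonlocal problem obtained by freezing the nonlinearity at $\overline u^{(k)}$, i.e. $\overline u^{(k+1)}(t,x)=e^{-Ct}\overline u^{(k)}(\omega,x)+\int_0^t e^{-C(t-s)}[D(s)(J*\overline u^{(k)}-\overline u^{(k)})(s,x)+\overline u^{(k)}(s,x)(C+\bar a(s)-\bar b(s)\overline u^{(k)}(s,x))]\,\mathrm ds$ on $[0,\omega]$, and then extend periodically; symmetrically define $\underline u^{(k)}$ starting from $\hat u$. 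Using Lemma \ref{lem3.1} (the maximum principle) applied to the differences $\overline u^{(k)}-\overline u^{(k+1)}$ and $\overline u^{(k+1)}-\underline u^{(k+1)}$, together with the defining inequalities \eqref{eq3.1} for upper/lower solutions and the hypothesis $\hat u(0,x)\le u_0(x)\le\tilde u(0,x)$, one shows by induction that
\[ \hat u\le\underline u^{(0)}\le\underline u^{(1)}\le\cdots\le\underline u^{(k)}\le\overline u^{(k)}\le\cdots\le\overline u^{(1)}\le\overline u^{(0)}\le\tilde u \]
on $[0,\omega]\times[l_1,l_2]$. Monotone bounded convergence then produces pointwise limits $\overline u=\lim_k\overline u^{(k)}$ and $\underline u=\lim_k\underline u^{(k)}$; dominated convergence in the integral representation \eqref{eq3.6} shows these limits are fixed points of $\mathcal F$, hence solutions of \eqref{eq1.6} by Corollary \ref{cor3.6}, and the regularity $C^{1,0}$ on each subinterval $(i\omega,(i+\rho)\omega]$ and $((i+\rho)\omega,(i+1)\omega]$ follows from the construction exactly as in Lemma \ref{lem3.2}. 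Maximality and minimality follow because any periodic solution $w$ with $\hat u\le w\le\tilde u$ is, by the monotonicity of $\mathcal F$ and induction, trapped between $\underline u^{(k)}$ and $\overline u^{(k)}$ for every $k$.

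It remains to insert $u(t,x;u_0)$ between $\underline u$ and $\overline u$ in the limit $n\to\infty$. Here I would compare $u(t+n\omega,x;u_0)$ with the iterates: since $\hat u(0,\cdot)\le u_0\le\tilde u(0,\cdot)$ and $\tilde u,\hat u$ are upper/lower solutions of \eqref{eq1.6} (hence also of \eqref{eq1.1} on $[0,\infty)$ after the boundary-condition adjustment noted right after Definition \ref{def3.1}), Lemma \ref{lem3.1} gives $\hat u(t,x)\le u(t,x;u_0)\le\tilde u(t,x)$ for all $t\ge0$; applying the comparison principle over successive periods and using that $\tilde u(\omega,\cdot)\le\tilde u(0,\cdot)$, $\hat u(\omega,\cdot)\ge\hat u(0,\cdot)$ yields $u(t+\omega,x;u_0)\le\tilde u(t,x)$ and, iterating, $u(t+n\omega,x;u_0)\le\overline u^{(n)}(t,x)$ and $u(t+n\omega,x;u_0)\ge\underline u^{(n)}(t,x)$ for every $n$. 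Passing to $\varlimsup$ and $\varliminf$ as $n\to\infty$ and using $\overline u^{(n)}\downarrow\overline u$, $\underline u^{(n)}\uparrow\underline u$ gives the claimed chain of inequalities.

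The main obstacle I anticipate is the bookkeeping across the discontinuity at $t=(i+\rho)\omega$: the maximum principle Lemma \ref{lem3.1} is stated for the specific piecewise structure, so each comparison step (for $\overline u^{(k)}-\overline u^{(k+1)}$, for $u(\cdot+n\omega;u_0)$ versus the iterates, and for an arbitrary periodic solution versus the iterates) must be phrased as an inequality of the form \eqref{eq2e1} with the right coefficient $c(t,x)$ — essentially $c(t,x)=\bar a(t)-\bar b(t)(\overline u^{(k)}+\overline u^{(k+1)})$ or the analogous linear interpolation — and one must check that these coefficients are in $L^\infty$ and that the "initial ordering at $t=0$ (or at $t=n\omega$)" hypothesis is genuinely propagated by the periodic extension. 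Verifying that the chosen $C$ (equivalently $k$ in Lemma \ref{lem3.1}) simultaneously makes the reaction monotone and satisfies the positivity requirements $p_0>0$, $\inf p_1>0$ is routine but needs to be done once and for all at the start.
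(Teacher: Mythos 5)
Your proposal is correct and follows essentially the same route as the paper: a monotone iteration seeded by the upper/lower solutions with the initial datum shifted by one period at each step ($\overline u^{n}(0,\cdot)=\overline u^{n-1}(\omega,\cdot)$), comparison of the iterates and of $u(\cdot+n\omega,\cdot)$ via the maximum principle of Lemma \ref{lem3.1}, monotone limits identified as $\omega$-periodic solutions through the integral representation of Corollary \ref{cor3.6}, and maximality/minimality by trapping an arbitrary periodic solution between the iterates. The only substantive difference is that your iteration is fully explicit (dispersal and reaction both frozen at the previous iterate) while the paper freezes only the reaction and solves a linear nonlocal evolution equation at each step; for your version the constant $C$ must also absorb the $-D(s)u$ part of the dispersal term (so $C$ must exceed $d$ plus the reaction bounds, not merely make $s\mapsto s[C+\bar a-\bar b s]$ monotone), which the positivity requirement $p_1>0$ you flag at the end already accounts for.
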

\begin{proof}
For notational convenience, denote $Q=(0,+\infty)\times[l_1,l_2]$ and $J*u(t,x)=\int_{l_1}^{l_2}J(x-y)u(t,y)\mathrm dy$ for $u\in C(\overline Q)$. Set
\[ I=\Big[ -\|\hat u\|_{L^{\infty}(Q)}-\|\tilde u\|_{L^{\infty}(Q)},\|\hat u\|_{L^{\infty}(Q)}+\|\tilde u\|_{L^{\infty}(Q)} \Big]. \]
At first we take $K>1$ such that for $u\in I$, $u[\bar a(t)-\bar bu]+Ku$ and $\bar a(t)u-2(\|\hat u\|_{L^{\infty}(Q)}+\|\tilde u\|_{L^{\infty}(Q)})\bar b(t) u+Ku$ are both increasing with respect to $u$. Define
\[ \mathcal L[u](t,x)=u(t,x)-e^{-Kt}u(0,x)-\int_0^te^{-K(t-s)}D(s)\left[ J*u(s,x)-u(s,x) \right]\mathrm ds \]
and
\[ F(t,u)=u(t,x)[\bar a(t)-\bar b(t)u(t,x)]+Ku(t,x). \]
We construct two iterations sequences by the following linear nonlocal evolution equations
\begin{equation}\label{eq3.7}
\begin{cases}
\mathcal L[\overline u^n](t,x)=\int_0^te^{-K(t-s)}F(s,\overline u^{n-1}(s,x))\mathrm ds, & (t,x)\in Q, \\
\overline u^n(0,x)=\overline u^{n-1}(\omega,x), & x\in [l_1,l_2]
\end{cases}
\end{equation}
and
\begin{equation}\label{eq3.8}
\begin{cases}
\mathcal L[\underline u^n](t,x)=\int_0^te^{-K(t-s)}F(s,\underline u^{n-1}(s,x))\mathrm ds, & (t,x)\in Q, \\
\underline u^n(0,x)=\underline u^{n-1}(\omega,x), & x\in [l_1,l_2],
\end{cases}
\end{equation}
where $n\geq 1,\overline u^0(t,x)=\tilde u(t,x)$ and $\underline u^0(t,x)=\hat u(t,x)$. We can check that a sufficiently large constant is an upper-solution of \eqref{eq3.7} (resp. \eqref{eq3.8}) since $K>1$. Then an application of Banach's fixed point theorem and comparison principle yields that the linear initial value problem \eqref{eq3.7} (resp. \eqref{eq3.8}) has a unique bounded global solution $\overline u^n(t,x)$ (resp. $\underline u^n(t,x)$) for any $n\geq 1$. We complete the proof of this theorem by the following four steps.

{\it Step 1.} The sequences $\{\overline u^n\}_{n=1}^{\infty}$ and $\{\underline u^n\}_{n=1}^{\infty}$ satisfy
\begin{equation}\label{eq3.9}
\hat u(t,x)\leq \underline u^n(t,x)\leq \underline u^{n+1}(t,x)\leq u(t+n\omega,x)\leq \overline u^{n+1}(t,x)\leq \overline u^n(t,x)\leq \tilde u(t,x),\ \ (t,x)\in \overline Q,
\end{equation}
for $n\geq 1$.

Since $\tilde u(t,x)$ is a bounded upper-solution of \eqref{eq1.6} and $\tilde u(0,x)\geq u_0(x)$, we see that $\tilde u(t,x)$ is also a bounded upper-solution of \eqref{eq1.1}. Then by Lemma \ref{lem3.1}, we have $\tilde u(t,x)\geq u(t,x)$ in $\overline Q$. It follows from \eqref{eq3.7} and Corollary \ref{cor3.6} that $\overline u^1(t,x)$ satisfies
\begin{equation}\label{eq3.10}
\begin{cases}
\overline u_t^1(t,x)=D(t)\left[ J*\overline u^1(t,x)-\overline u^1(t,x) \right]+\tilde u(t,x)[\bar a(t)-\bar b(t)\tilde u(t,x)]+K[\tilde u(t,x)-\overline u^1(t,x)], & (t,x)\in Q, \\
\overline u^1(0,x)=\tilde u(\omega,x), & x\in [l_1,l_2].
\end{cases}
\end{equation}
Set $w^1(t,x)=\overline u^0(t,x)-\overline u^1(t,x)=\tilde u(t,x)-\overline u^1(t,x)$. Since
\[ \begin{cases}
\tilde u_t(t,x)\geq D(t)\left[ J*\tilde u(t,x)-\tilde u(t,x) \right]+\tilde u(t,x)[\bar a(t)-\bar b(t)\tilde u(t,x)], & (t,x)\in Q, \\
\tilde u(0,x)\geq \tilde u(\omega,x), & x\in [l_1,l_2],
\end{cases} \]
there holds that
\[ \begin{cases}
w^1_t(t,x)\geq D(t)\left[ J*w^1(t,x)-w^1(t,x) \right]-Kw^1(t,x), & (t,x)\in Q, \\
w^1(0,x)\geq 0, & x\in [l_1,l_2],
\end{cases} \]
which together with Lemma \ref{lem3.1} implies that $w^1(t,x)\geq 0$ and so
\[ \overline u^1(t,x)\leq \overline u^0(t,x)=\tilde u(t,x)\ \ \mathrm{for}\ (t,x)\in\overline Q. \]
By a similar manner for lower-solution $\hat u(t,x)$, we have
\[ u(t,x)\geq \hat u(t,x)\ \ \mathrm{and}\ \ \underline u^1(t,x)\geq \underline u^0(t,x)=\hat u(t,x)\ \ \mathrm{for}\ (t,x)\in\overline Q. \]

Now we let $w^2(t,x)=\overline u^1(t,x)-\underline u^1(t,x)$. In view of $\hat u(t,x)\leq u(t,x)\leq \tilde u(t,x)$ in $\overline Q$, by \eqref{eq3.7} and \eqref{eq3.8}, we have $w^2(0,x)=\tilde u(\omega,x)-\hat u(\omega,x)\geq 0$ and
\begin{align*}
w^2_t(t,x)&=D(t)\left[ J*w^2(t,x)-w^2(t,x) \right]-Kw^2(t,x)  \\
&\phantom{=\ }+\bar a(t)[\tilde u(t,x)-\hat u(t,x)]-\bar b(t)[\tilde u^2(t,x)-\hat u^2(t,x)]+K[\tilde u(t,x)-\hat u(t,x)]  \\
&\geq D(t)\left[ J*w^2(t,x)-w^2(t,x) \right]-Kw^2(t,x)\ \ \mathrm{in}\ \overline Q,
\end{align*}
where the conditions satisfied by $K$ are used here. It follows from Lemma \ref{lem3.1} that $w^2(t,x)\geq 0$ and hence $\overline u^1(t,x)\geq \underline u^1(t,x)$ in $\overline Q$.

Next, we show that $\underline u^1(t,x)\leq u(t+\omega)\leq \overline u^1(t,x)$ in $\overline Q$. Let $w^3(t,x)=\overline u^1(t,x)-u(t+\omega,x)$. Notice that $u(t+\omega,x)$ satisfies
\begin{align}
u_t(t+\omega,x)
&= D(t+\omega)\left[ J*\tilde u(t+\omega,x)-\tilde u(t+\omega,x) \right]+\tilde u(t+\omega,x)[\bar a(t+\omega)-\bar b(t+\omega)\tilde u(t+\omega,x)] \notag \\
&=D(t)\left[ J*\tilde u(t+\omega,x)-\tilde u(t+\omega,x) \right]+\tilde u(t+\omega,x)[\bar a(t)-\bar b(t)\tilde u(t+\omega,x)]\ \ \mathrm{in}\ \overline Q. \label{eq3.11}
\end{align}
Combining \eqref{eq3.10} and \eqref{eq3.11}, there holds that $w^3(0,x)=\overline u^1(0,x)-u(\omega,x)=\tilde u(\omega,x)-u(\omega,x)\geq 0$ for $x\in[l_1,l_2]$ and
\begin{align*}
w^3_t(t,x)&=D(t)\left[ J*w^3(t,x)-w^3(t,x) \right]+\tilde u(t,x)\big[\bar a(t)-\bar b(t)\tilde u(t,x)\big]+K\big[\tilde u(t,x)-\overline u^1(t,x)\big]  \\
&\phantom{=\ }-u(t+\omega)\big[\bar a(t)-\bar b(t)u(t+\omega,x)\big]  \\
&= D(t)\left[ J*w^3(t,x)-w^3(t,x) \right]+\big[\bar a(t)-\bar b(t)\big(\overline u^1(t,x)+u(t+\omega,x)\big)\big]w^3(t,x)  \\
&\phantom{=\ }+\bar a(t)\big[\tilde u(t,x)-\overline u^1(t,x)\big]-\bar b(t)\big[\tilde u(t,x)+\overline u^1(t,x)\big]\big[\tilde u(t,x)-\overline u^1(t,x)\big]+K\big[\tilde u(t,x)-\overline u^1(t,x)\big].
\end{align*}
Since $\tilde u(t,x)\geq \overline u^1(t,x)\geq \underline u^1(t,x)\geq \hat u(t,x)$, by the condition satisfied by $K$, we see that
\[ \bar a(t)\big[\tilde u(t,x)-\overline u^1(t,x)\big]-\bar b(t)\big[\tilde u(t,x)+\overline u^1(t,x)\big]\big[\tilde u(t,x)-\overline u^1(t,x)\big]+K\big[\tilde u(t,x)-\overline u^1(t,x)\big]\geq 0\ \ \mathrm{in}\ \overline Q, \]
which leads to that
\[ w^3_t(t,x)\geq D(t)\left[ J*w^3(t,x)-w^3(t,x) \right]+\big[\bar a(t)-\bar b(t)\big(\overline u^1(t,x)+u(t+\omega,x)\big)\big]w^3(t,x)\ \ \mathrm{in}\ \overline Q. \]
Due to the boundedness of $u^1(t,x)$ and $u(t+\omega,x)$, we can derive from Lemma \ref{lem3.1} that $w^3(t,x)\geq 0$ and then $\overline u^1(t,x)\geq u(t+\omega,x)$ in $\overline Q$. Similarly, we also have $\underline u^1(t,x)\leq u(t+\omega,x)$ in $\overline Q$. Therefore, the following inequalities are true:
\[ \hat u(t,x)\leq \underline u^1(t,x)\leq u(t+\omega,x)\leq \overline u^1(t,x)\leq \tilde u(t,x),\ \ (t,x)\in \overline Q. \]

An induction argument implies the monotone property \eqref{eq3.9} immediately. Since $\{\overline u^n\}$ and $\{\underline u^n\}$ monotonically bounded sequences, there exist two bounded function $\overline u(t,x)$ and $\underline u(t,x)$ such that
\[ \lim_{n\to\infty}\overline u^n(t,x)=\overline u(t,x)\ \mathrm{and}\ \lim_{n\to\infty}\underline u^n(t,x)=\underline u(t,x) \]
and
\[ \underline u(t,x)\leq \varliminf_{n\to\infty}u(t+n\omega,x)\leq \varlimsup_{n\to\infty}u(t+n\omega,x)\leq \overline u(t,x) \]
for each $(t,x)\in\overline Q$. Thus from the dominated convergence theorem, we obtain that $\overline u(t,x)$ and $\underline u(t,x)$ are bounded solutions of the initial value problem
\[ \begin{cases}
u_t(t,x)=D(t)\left[ J*u(t,x)-u(t,x) \right]+u(t,x)[\bar a(t)-\bar b(t)u(t,x)], & (t,x)\in Q, \\
u(0,x)=u(\omega,x), & x\in [l_1,l_2].
\end{cases} \]

{\it Step 2.} We prove that $\overline u(t,x),\underline u(t,x)\in C^{1,0}((i\omega,(i+\rho)\omega]\times[l_1,l_2])\cap C^{1,0}(((i+\rho)\omega,(i+1)\omega]\times[l_1,l_2])$ for all $i\in \mathbb Z_+$.

Since $\overline u(t,x)=0$ in $Q'$ and
\[ \overline u(t,x)=\overline u(0,x)+\int_0^t\Big[D(s)\big[J*\overline u(s,x)-\overline u(s,x)\big]+\overline u(s,x)\big[ \bar a(s)-\bar b(s)\overline u(s,x) \big] \Big]\mathrm ds, \]
it holds that
\[ \overline u(t+\varepsilon,x)-\overline u(t,x)=\int_t^{t+\varepsilon}\Big[D(s)\big[J*\overline u(s,x)-\overline u(s,x)\big]+\overline u(s,x)\big[ \bar a(s)-\bar b(s)\overline u(s,x) \big] \Big]\mathrm ds \]
for each fixed $(t,x)\in\overline Q$, where $|\varepsilon|>0$ is sufficiently small. Then, we have
\[ |\overline u(t+\varepsilon,x)-\overline u(t,x)|\leq \int_t^{t+\varepsilon}\left|\Big[D(s)\big[J*\overline u(s,x)-\overline u(s,x)\big]+\overline u(s,x)\big[ \bar a(s)-\bar b(s)\overline u(s,x) \big] \Big]\right|\mathrm ds\leq C|\varepsilon|, \]
where $C>0$ is a constant independent of $\varepsilon$. This means that $\overline u(t,x)$ is continuous in $t\in [0,+\infty)$. The continuity of $\overline u(t,x)$ in $x\in[l_1,l_2]$ follows from the argument in \cite{AMRT10}.

For any $t_0\in(0,+\infty)$, there must exist a unique $i_0\in\mathbb Z_+$ such that either $t_0\in(i_0\omega,(i_0+\rho)\omega]$, or $t_0\in((i_0+\rho)\omega,(i_0+1)\omega]$. When $t_0,t_0+\varepsilon\in(i_0\omega,(i_0+\rho)\omega]$, we see that
\begin{align*}
\lim_{\varepsilon\to 0}\dfrac{\overline u(t+\varepsilon,x)-\overline u(t,x)}{\varepsilon}
&=\lim_{\varepsilon\to 0}\frac{1}{\varepsilon}\int_t^{t+\varepsilon}\Big[-\delta\overline u(s,x)\Big]\mathrm ds \\
&=-\delta \lim_{\varepsilon\to 0}\overline u(t+\theta\varepsilon,x)\ (0<\theta<1) \\
&=-\delta \overline u(t,x).
\end{align*}
When $t_0,t_0+\varepsilon \in((i_0+\rho)\omega,(i_0+1)\omega]$, we have
\begin{align*}
\lim_{\varepsilon\to 0}\dfrac{\overline u(t+\varepsilon,x)-\overline u(t,x)}{\varepsilon}
&=\lim_{\varepsilon\to 0}\frac{1}{\varepsilon}\int_t^{t+\varepsilon}\Big[d\big[J*\overline u(s,x)-\overline u(s,x)\big]+\overline u(s,x)\big[ a-b\overline u(s,x) \big] \Big]\mathrm ds \\
&=\lim_{\varepsilon\to 0}d\big[J*\overline u(t+\theta\varepsilon,x)-\overline u(t+\theta\varepsilon,x)\big]+\overline u(t+\theta\varepsilon,x)\big[ a-b\overline u(t+\theta\varepsilon,x) \big]\ (0<\theta<1) \\
&=d\big[J*\overline u(t,x)-\overline u(t,x)\big]+\overline u(t,x)\big[ a-b\overline u(t,x) \big].
\end{align*}
Hence, $\overline u(t,x)\in C^{1,0}((i\omega,(i+\rho)\omega]\times[l_1,l_2])\cap C^{1,0}(((i+\rho)\omega,(i+1)\omega]\times[l_1,l_2])$ for all $i\geq 0$ due to the arbitrariness of $t_0$.

The proof for $\underline u(t,x)$ is similar.

{\it Step 3.} We prove that $\overline u(t,x)=\overline u(t+\omega,x)$ and $\underline u(t,x)=\underline u(t+\omega,x)$ for all $t\geq 0$.

Let $v(t,x)=\overline u(t+\omega,x)-\overline u(t,x)$. Note that $D(t),\bar a(t)$ and $\bar b(t)$ are all $\omega-$periodic in $t$. Then
\begin{equation}\label{eq3.12}
v_t(t,x)=D(t)\left[\int_{\Omega}J(x-y)v(t,y)\mathrm dy-v(t,x) \right]+\bar a(t)v(t,x)-\bar b(t)[\overline u(t+\omega,x)+\overline u(t,x)]v(t,x).
\end{equation}
Since $v(0,x)=\overline u(\omega,x)-\overline u(0,x)=0$, the uniqueness of solution of initial value problem \eqref{eq3.12} implies that $v(t,x)\equiv 0$ in $Q$, equivalently, $\overline u(t,x)$ is $\omega$-periodic in $t$.

Similarly, we can also prove that $\underline u(t,x)=\underline u(t+\omega,x)$, and omit the details here.

{\it Step 4.} We show the maximality of $\overline u(t,x)$ and minimality of $\underline u(t,x)$. Notice that every $\omega$-periodic solution $u^*(t,x)$ of \eqref{eq1.6} satisfies $\hat u(t,x)\leq u^*(t,x)\leq \tilde u(t,x)$. Meanwhile, $u^*(t,x)$ is a lower-solution as well as a upper-solution of \eqref{eq1.6}. By choosing $\tilde u$ and $u^*$ as a pair of upper-lower solutions to \eqref{eq1.6}, there holds that $u^*(t,x)\leq \overline u^n(t,x)\leq \tilde u(t,x)$ and hence $u^*(t,x)\leq \overline u(t,x)\leq \tilde u(t,x)$. On the other hand, if we take $u^*$ and $\hat u$ as a pair of upper-lower solutions to \eqref{eq1.6}, then $\hat u(t,x)\leq \underline u(t,x)\leq u^*(t,x)$.
\end{proof}

\subsection{Proof of Theorems \ref{thm1.1} and \ref{thm1.2}}

In this subsection, we complete the proof of Theorems \ref{thm1.1} and \ref{thm1.2}. Linearizing model \eqref{eq1.1} at zero, we obtain the time-periodic eigenvalue problem
\begin{equation}\label{eq3.13}
\begin{cases}
v_t+\delta v=\lambda v, & i\omega<t\leq (i+\rho)\omega,\ l_1\leq x\leq l_2, \\
\displaystyle v_t-d(J*v-v)(t,x)+av=\lambda v, & (i+\rho)\omega<t\leq (i+1)\omega,\ l_1\leq x\leq l_2.
\end{cases}
\end{equation}
It is well known (see, e.g., \cite{BCV16,C10,CDM13}) that the time independent eigenvalue equation
\begin{equation}\label{eq3.14}
d(J*\phi-\phi)(t,x)+a\phi(x)=-\sigma\phi(x),\quad x\in[l_1,l_2].
\end{equation}
admits a principal eigenvalue $\sigma_1$, which satisfies $\sigma_1<d-a$. Moreover, from \cite[Proposition 3.4]{CDLL19}, we see that
\begin{proposition}\label{prop3.6}
Assume that ${\bf (J)}$ holds and $-\infty<l_1<l_2<+\infty$. Then the following hold true:
\begin{enumerate}[(1)]
\item $\sigma_1$ is strictly decreasing and continuous in $\ell:=l_2-l_1$;
\item $\lim_{l_2-\l_1\to +\infty}\sigma_1=-a$;
\item $\lim_{l_2-\l_1\to 0^+}\sigma_1=d-a$.
\end{enumerate}
\end{proposition}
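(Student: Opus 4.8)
The plan is to reduce all three assertions to properties of the principal eigenvalue $\mu(\ell)$ of the bounded, self-adjoint, order-preserving operator $K_{(l_1,l_2)}\phi(x):=\int_{l_1}^{l_2}J(x-y)\phi(y)\,\mathrm dy$ on $L^2([l_1,l_2])$. Writing $L_{(l_1,l_2)}\phi=d(J*\phi-\phi)+a\phi$, one has $-L_{(l_1,l_2)}=(d-a)I-dK_{(l_1,l_2)}$, and since $J$ is translation invariant, $K_{(l_1,l_2)}$ is unitarily equivalent to $K_{(0,\ell)}$; hence $\sigma_1$ depends only on $\ell:=l_2-l_1$ and $\sigma_1(\ell)=(d-a)-d\,\mu(\ell)$, where $\mu(\ell)=\sup_{0\ne\phi\in L^2}\langle K_{(0,\ell)}\phi,\phi\rangle/\|\phi\|_{L^2}^2$. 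Because $J(x-y)$ is a Hilbert--Schmidt kernel on the bounded square $[0,\ell]^2$ and $J(0)>0$ makes $K_{(0,\ell)}$ (eventually) strongly positive, Krein--Rutman gives that $\mu(\ell)$ is simple, isolated, and attained at a continuous eigenfunction $\phi_\ell>0$ on $(0,\ell)$; moreover $\int_0^\ell J(x-y)\,\mathrm dy\le 1$, with strict inequality near the endpoints, yields $0<\mu(\ell)<1$, recovering $\sigma_1<d-a$. With this dictionary, parts (1)--(3) become statements about $\mu(\ell)$.

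For (1), fix $\ell<\ell'$ and view $[0,\ell]\subset[0,\ell']$. Extending $\phi_\ell$ by $0$ to $[0,\ell']$ is an admissible test function with unchanged Rayleigh quotient, so $\mu(\ell')\ge\mu(\ell)$, i.e. $\sigma_1$ is non-increasing. For strictness, suppose $\mu(\ell')=\mu(\ell)$; then the zero-extension of $\phi_\ell$ maximizes the Rayleigh quotient on $[0,\ell']$, hence solves $K_{(0,\ell')}\phi_\ell=\mu(\ell')\phi_\ell$, and evaluating at $x\in(\ell,\ell')$ close to $\ell$ gives $\int_0^\ell J(x-y)\phi_\ell(y)\,\mathrm dy=\mu(\ell')\cdot 0=0$, contradicting $\phi_\ell>0$ near $\ell$ together with $J>0$ on a neighbourhood of $0$. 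Hence $\mu$ is strictly increasing and $\sigma_1$ strictly decreasing in $\ell$.

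For (3), evaluating $K_{(0,\ell)}\phi_\ell=\mu(\ell)\phi_\ell$ at a maximum point of $\phi_\ell$ gives $\mu(\ell)\le\sup_x\int_0^\ell J(x-y)\,\mathrm dy\le\|J\|_{L^\infty}\ell\to 0$ as $\ell\to 0^+$, so $\sigma_1(\ell)\to d-a$. For (2), $\mu$ is increasing and $\le 1$, so $\mu(\ell)\uparrow\mu_\infty\le 1$; testing with $\mathbf 1_{[0,R]}$ (admissible once $\ell\ge R$) gives $\mu(\ell)\ge R^{-1}\int_0^R\int_0^R J(x-y)\,\mathrm dx\,\mathrm dy$, and the right-hand side tends to $1$ as $R\to\infty$ because $\int_0^R J(x-y)\,\mathrm dy=\int_{x-R}^{x}J(z)\,\mathrm dz\to 1$ uniformly for $x$ in any bulk $[\varepsilon R,(1-\varepsilon)R]$. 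Letting $\ell\to\infty$ and then $R\to\infty$ forces $\mu_\infty=1$, i.e. $\sigma_1(\ell)\to(d-a)-d=-a$.

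Continuity of $\sigma_1$ in $\ell$ reduces to that of $\mu$, and since $\mu$ is monotone it suffices to control one-sided limits. For right-continuity at $\ell_0$ one splits $[0,\ell]=[0,\ell_0]\cup[\ell_0,\ell]$ with $\ell\downarrow\ell_0$ and bounds the off-diagonal blocks of $K_{(0,\ell)}$ by integrals of $J$ over strips of width $\ell-\ell_0$, obtaining $\mu(\ell)\le\mu(\ell_0)+o(1)$, which with $\mu(\ell)\ge\mu(\ell_0)$ gives the limit. For left-continuity, restrict $\phi_{\ell_0}$ to $[0,\ell]$ (nonzero for $\ell$ near $\ell_0$) and pass to the limit in its Rayleigh quotient to get $\liminf_{\ell\uparrow\ell_0}\mu(\ell)\ge\mu(\ell_0)$; monotonicity closes the argument. (Alternatively, rescale $[0,\ell]$ to $[0,1]$: the rescaled kernel $\ell J(\ell(\,\cdot\,))$ depends continuously on $\ell$ in Hilbert--Schmidt norm by dominated convergence, so its simple isolated principal eigenvalue, hence $\sigma_1$, varies continuously.) I expect the main obstacle to be the strictness in (1) and the continuity: both hinge on the Krein--Rutman structure (simplicity of $\mu(\ell)$ and strict positivity of $\phi_\ell$), which is precisely where $J(0)>0$ and $J\in C(\mathbb R)$ enter; once that is in place the remaining estimates are elementary. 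This is the line of argument in \cite[Proposition 3.4]{CDLL19}.
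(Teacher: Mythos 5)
Your argument is correct, but there is nothing in the paper to compare it against line by line: the authors do not prove Proposition \ref{prop3.6} at all, they simply quote it from \cite[Proposition 3.4]{CDLL19}. What you have written is a self-contained proof along the standard variational lines (and essentially the argument used in that reference): since the zero-order coefficients are constants, $-L_{(l_1,l_2)}=(d-a)I-dK_{(l_1,l_2)}$ with $K$ compact, self-adjoint and positivity-preserving, so $\sigma_1(\ell)=(d-a)-d\mu(\ell)$ with $\mu(\ell)$ the top of the Rayleigh quotient; translation invariance of $J(x-y)$ justifies viewing everything as a function of $\ell$ alone. Your monotonicity-by-zero-extension, the strictness via the eigenfunction identity on $(\ell,\ell')$, the evaluation at a maximum point for the $\ell\to0^+$ limit, and the indicator test functions for the $\ell\to\infty$ limit are all sound; the rescaling-to-$[0,1]$ continuity argument is the cleanest of the two you offer and is fully rigorous (Hilbert--Schmidt continuity of the kernel plus Lipschitz dependence of the top eigenvalue on the operator norm). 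Two small points worth tightening: (i) the step ``a maximizer of the Rayleigh quotient is an eigenfunction for $\mu(\ell')$'' should be flagged as using compactness and self-adjointness of $K_{(0,\ell')}$, and the identification of this variational eigenvalue with the Krein--Rutman principal eigenvalue uses irreducibility coming from $J(0)>0$ and $J\in C(\mathbb R)$; (ii) your parenthetical claim that $\mu(\ell)<1$ strictly is not actually established by the Schur-type bound when $J$ has compact support and $\ell$ is large, but you only ever use $\mu(\ell)\le1$, so nothing breaks. Net effect: you have supplied a complete proof of a statement the paper leaves to a citation.
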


Let $\phi_1(x)$ be the positive eigenfunction of \eqref{eq3.14} associated with $\sigma_1$. By defining
\[ \sigma(t)=\begin{cases}
\delta, & t\in (i\omega,(i+\rho)\omega], \\
\sigma_1, & t\in ((i+\rho)\omega,(i+1)\omega],
\end{cases} \]
we see that
\[ D(t)\left[ J*\phi_1(x)-\phi_1(x) \right]+\bar a(t)\phi_1(x)=-\sigma(t)\phi_1(x),\ \ \forall t\in\mathbb R,x\in\overline\Omega. \]
Set
\begin{equation}\label{eq3.15}
\varphi(t,x)=\exp\left[ \big((1-\rho)\sigma_1+\rho\delta \big)t-\int_0^t\sigma(s)\mathrm ds \right]\phi_1(x).
\end{equation}
Then there holds that
\[ \begin{cases}
\varphi_t+\delta \varphi=[(1-\rho)\sigma_1+\rho\delta] \varphi, & i\omega<t\leq (i+\rho)\omega,\ l_1\leq x\leq l_2, \\
\displaystyle \varphi_t-d(J*\varphi-\varphi)(t,x)+a\varphi=[(1-\rho)\sigma_1+\rho\delta] \varphi, & (i+\rho)\omega<t\leq (i+1)\omega,\ l_1\leq x\leq l_2, \\
\varphi(t,x)=\varphi(t+\omega,x), & t\geq 0,l_1\leq x\leq l_2.
\end{cases} \]
This means that $(1-\rho)\sigma_1+\rho\delta$ is a eigenvalue of \eqref{eq3.13} with the positive eigenfunction $\varphi(t,x)$. In the proof of Theorems \ref{thm1.1}, we will show that $(1-\rho)\sigma_1+\rho\delta$ serves as a threshold which determines whether the species can persist.
\begin{proof}[Proof of Theorem \ref{thm1.1}]
Let $\lambda_1=(1-\rho)\sigma_1+\rho\delta$. we first consider two cases on the sign of $\lambda_1$.

{\bf Case 1.} Suppose that $\lambda_1<0$.

One can easily check that a sufficiently large positive constant $M$ is a upper-solution of \eqref{eq1.1} as well as the upper-solution of \eqref{eq1.6}. Following the comparison argument in Theorem \ref{thm3.7}, we see that for any $(t,x)\in[0,\omega]\times [l_1,l_2]$, $u(t+n\omega,x;M)$ is non-increasing with respect to $n$. Then the function
\[ u^+(t,x):=\lim_{n\to\infty}u(t+n\omega,x;M),\ \ (t,x)\in[0,\omega]\times [l_1,l_2] \]
is well-defined and upper semi-continuous. On the other hand, let $\varphi(t,x)$ be defined as in \eqref{eq3.15}. For any $0<\varepsilon\ll 1$, by $\lambda_1<0$, we see that $\varepsilon \varphi$ is a lower-solution of \eqref{eq1.1} as well as a lower-solution of \eqref{eq1.6}. Again, by the comparison argument, $u(t+n\omega,x;\varepsilon\varphi(0,x))$ is non-decreasing as $n$ increases for any $(t,x)\in[0,\omega]\times [l_1,l_2]$. Thus, the function
\[ u^-(t,x):=\lim_{n\to\infty}u(t+n\omega,x;\varepsilon\varphi),\ \ (t,x)\in[0,\omega]\times [l_1,l_2] \]
is well-defined and lower semi-continuous. Obviously, $u^-\leq u^+$.

Next, we show $u^-(t,x)\equiv u^+(t,x)$. For this purpose, we define
\[ \gamma_n:=\inf\left\{ \ln\alpha: \frac{1}{\alpha}u(t+n\omega,x;M)\leq u(t+n\omega,x;\varepsilon\varphi(0,x))\leq \alpha u(t+n\omega,x;M),\ (t,x)\in[0,\omega]\times [l_1,l_2] \right\}. \]
Since the sequence $\{u(\cdot+n\omega,\cdot;\varepsilon\varphi(0,\cdot))\}_n$ is non-decreasing and $\{u(\cdot+n\omega,\cdot;M)\}_n$ is non-increasing, $u(\cdot+n\omega,\cdot;\varepsilon\varphi(0,\cdot))$ and $u(\cdot+n\omega,\cdot;M)$ will be closer to each other when $n$ decreases. Consequently, $\{ \gamma_n \}_n$ is a non-increasing sequence, and then the limit $\gamma_*:=\lim_{n\to\infty}\gamma_n$ exists. If $\gamma_*>0$, then by the comparison argument, we can construct some $\alpha_*>1$ and $0<\sigma\ll 1$ such that $\frac{1}{\alpha_*}u(\cdot+n\omega,\cdot;M)\leq u(\cdot+n\omega,\cdot;\varepsilon\varphi(0,\cdot))\leq \alpha_* u(\cdot+n\omega,\cdot;M)$ and $\ln \alpha_*<\gamma_n-\sigma$ for sufficiently large $n$. This causes a contradiction with the definition of $\gamma_*$. Hence, $\gamma_*=0$ and the equality $u^+=u^-$ follows.

Notice that $u^+$ is upper semi-continuous and $u^-$ is lower semi-continuous. Then $u^*:=u^+$ is continuous and $\inf\limits_{[0,\omega]\times[l_1,l_2]}u^*>0$. Using Dini's Theorem, we have $\lim_{n\to\infty}u(\cdot+n\omega,\cdot;M)=\lim_{n\to\infty}u(\cdot+n\omega,\cdot;\varepsilon\varphi(0,\cdot))=u^*$ uniformly for $(t,x)\in [0,\omega]\times[l_1,l_2]$. This also means that
\[ u(t+\omega,x;u^*(0,x))=\lim_{n\to\infty}u(t+\omega,x;u(n\omega,\cdot;M))=\lim_{n\to\infty}u(t+(n+1)\omega,x;M)=u^*(t,x). \]
This is, $u(t,x;u^*(0,x))$ is $\omega$-periodic in $t$. The existence of time periodic positive solution of \eqref{eq1.6} is established.

By the above contraction argument, we can obtain the existence of the solutions of \eqref{eq1.6}. The uniqueness follows directly from Theorem \ref{thm3.7} and the above argument. To emphasize the dependence of $u^*(t,x)$ on $l_1,l_2$, denote by $u_{(l_1,l_2)}^*(t,x)$ the unique time periodic positive solution of \eqref{eq1.6}. Since
\[ \varepsilon\varphi(t,x)\leq u(t,x;u_0)\leq M,\ \ (t,x)\in [0,\omega]\times[l_1,l_2] \]
for $0<\varepsilon\ll 1$ and $M\gg 1$, the above contraction argument also implies that
\[ \lim_{n\to\infty}u(t+n\omega,x;u_0)=u_{(l_1,l_2)}^*(t,x) \]
uniformly in $C([0,\omega]\times [l_1,l_2])$. The global stability of $u_{(l_1,l_2)}^*$ can also be inferred from Theorem \ref{thm3.7} and the uniqueness of the solutions of \eqref{eq1.6}.

{\bf Case 2.} Suppose that $\lambda_1\geq 0$.

At first, we show the nonexistence of positive solution of \eqref{eq1.6}. By way of contradiction, suppose that $v^*$ is a positive solution of \eqref{eq1.6}. Then we can choose $\epsilon>0$ small enough such that $\epsilon\varphi<v^*$ in $[0,+\infty)\times[l_1,l_2]$. There holds that
\begin{align*}
0&\leq \lambda_1\varphi(t,x) \\
&=\partial_t\varphi(t,x)-D(t)\bigg[ \int_{l_1}^{l_2}J(x-y)\varphi(t,y)\mathrm dy-\varphi(t,x) \bigg]-\bar a(t)\varphi(t,x)  \\
&\leq \partial_t\varphi(t,x)-D(t)\bigg[ \int_{l_1}^{l_2}J(x-y)\varphi(t,y)\mathrm dy-\varphi(t,x) \bigg]-\bar a(t)\varphi(t,x)+\bar b(t)\varphi^2(t,x)
\end{align*}
in $[0,+\infty)\times[l_1,l_2]$, which means $\varphi$ is an upper-solution of \eqref{eq1.6}. It follows from the comparison argument that $v^*\leq \varphi$ in $[0,+\infty)\times[l_1,l_2]$. This is a contradiction. Hence the equation \eqref{eq1.6} admits no positive solution.

Since $\lambda_1\geq 0$, a simple calculation gives that for large $e>0$, $e\varphi(t,x)$ and $0$ are a pair of upper-lower solutions of \eqref{eq1.1} as well as a pair of upper-lower solutions of \eqref{eq1.6}. It then follows from Theorem \ref{thm3.7} that the time periodic problem \eqref{eq1.6} admits a minimal solution $\underline u$ and a maximal solution $\overline u$ satisfying
\[ 0\leq \underline u(t,x)\leq \varliminf_{n\to\infty}u(t+n\omega,x;u_0)\leq \varlimsup_{n\to\infty}u(t+n\omega,x;u_0)\leq \overline u(t,x)\leq e\varphi(t,x)\ \ \mathrm{in}\ [0,+\infty)\times[l_1,l_2]. \]
The nonexistence of positive solution to \eqref{eq1.6} implies that $\overline u=\underline u=0$. Thus, the solution $u(t,x;u_0)$ of \eqref{eq1.1} converges to $0$ point by point. Since $\overline u,\underline u\in C([0,+\infty)\times[l_1,l_2])$ and the sequences constructed in \eqref{eq3.9} are monotone, we have $\lim\limits_{t\to\infty}u(t,x;u_0)=0$ uniformly for $x\in[l_1,l_2]$ by Dini's Theorem.

From the above two cases, one can obtain that the sign of $(1-\rho)\sigma_1+\rho\delta$ can completely determine the long time behavior of the species. Therefore, the conclusions of Theorem \ref{thm1.1} follows from the above argument and Proposition \ref{prop3.6}.
\end{proof}

We further discuss the behaviors of the positive $\omega$-periodic solution to \eqref{eq1.6}. Look at the ODE system \eqref{eq1.2}. It is known from \cite[Theorem 2.1]{HZ12} that \eqref{eq1.2} admits a unique positive $\omega$-periodic solution $z^*$ satisfying the equation \eqref{eq1.7}, if and only if $(1-\rho)a-\rho\delta>0$, where $z^*\in C^1((i\omega,(i+\rho)\omega])\cap C^1(((i+\rho)\omega,(i+1)\omega])$ is bounded. Moreover, if $(1-\rho)a-\rho\delta\leq 0$, then the solution $z(t;z_0)$ of \eqref{eq1.2} converges to $0$ for all $z_0\in [0,+\infty)$ as $t\to+\infty$, while if $(1-\rho)a-\rho\delta>0$, then $\lim_{n\to\infty}(z(t+n\omega;z_0)-z^*(t))=0$ in $C([0,\omega])$ for all $z_0\in(0,+\infty)$. Using this fact, we can prove Theorem \ref{thm1.2}.
\begin{proof}[Proof of Theorem \ref{thm1.2}]
Since $(1-\rho)a-\rho\delta>0$, by Proposition \ref{prop3.6}, there holds that
\[ \lim_{l_2-l_1\to +\infty}\big[ (1-\rho)\sigma_1+\rho\delta \big]=\rho\delta-(1-\rho)a<0, \]
and thus there exists a large $\hat\ell>0$ such that
\[ (1-\rho)\sigma_1+\rho\delta<0\ \ \mathrm{as}\ l_2-l_1>\hat\ell. \]
The existence and uniqueness of $u_{(l_1,l_2)}^*(t,x)$ follow from Theorem \ref{thm1.1}. Note that $z^*(t)$ satisfies
\[ \begin{cases}
z_t=-\delta z, & 0< t\leq \rho\omega, \\
z_t=z(a-bz), & \rho\omega<t\leq \omega, \\
z(0)=z(\omega).
\end{cases} \]
Then $z^*(t)=e^{-\delta t}z^*(0)$ for $0\leq t\leq \rho\omega$. Meanwhile, $u^*_{(l_1,l_2)}(t,x)=e^{-\delta t}u^*_{(l_1,l_2)}(0,x)$ for $0\leq t\leq \rho\omega$ and $l_1\leq x\leq l_2$.

We make an assertion that for each $0<\epsilon\ll 1$, there exists $\ell_{\epsilon}\geq \hat\ell>0$ such that for each $l_1\in(-\infty,-\ell_{\epsilon})$ and $l_2\in(\ell_{\epsilon},+\infty)$,
\begin{equation}\label{eq3.16}
z^*(t)-\epsilon\leq u^*_{(l_1,l_2)}(t,x)\leq z^*(t)+\epsilon,\ \ (t,x)\in[\rho\omega,\omega]\times [l_1,l_2].
\end{equation}
Only the proof for the lower bound will be given here since that for the upper bound is similar. Clearly, $\min_{[0,\omega]}z^*(t)>0$. In fact, if there is $t_0>0$ such that $z^*(t_0)=0$, then $z^*(t)=0$ for all $t\geq t_0$, which is impossible as $z^*$ is periodic in $t$. Set $0<\epsilon\ll 1$. Then there exists $\eta(\epsilon)>0$ such that
\[ \hat z(t):=(1-\eta)z^*(t)\geq z^*(t)-\epsilon>0,\ \ t\in[\rho\omega,\omega]. \]
Observe that
\begin{align*}
&\hat z_t(t)-d\bigg[ \int_{l_1}^{l_2}J(x-y)\hat z(t)\mathrm dy-\hat z(t)\bigg]-\hat z(t)\big[a-b\hat z(t)\big] \\
&\phantom{=}=(1-\eta)z^*_t(t)-(1-\eta)z^*(t)d\bigg[ \int_{l_1}^{l_2}J(x-y)\mathrm dy-1\bigg] \\
&\phantom{==\ }-(1-\eta)z^*(t)\big[a-bz^*(t)\big]-\hat z(t)\big[a-b\hat z(t)\big]+(1-\eta)z^*(t)\big[a-bz^*(t)\big] \\
&\phantom{=}=-d(1-\eta)z^*(t)\bigg[ \int_{l_1}^{l_2}J(x-y)\mathrm dy-1\bigg]-b\eta(1-\eta){z^*}^2(t),\ \ \rho\omega<t\leq \omega,\ l_1\leq x\leq l_2.
\end{align*}
Denote
\[ E_l(t,x):=d(1-\eta)z^*(t)\bigg[ \int_{-l}^{l}J(x-y)\mathrm dy-1\bigg],\ \ (t,x)\in(\rho\omega,\omega]\times \mathbb R.  \]
Since $J\in C(\mathbb R)\cap L^{\infty}(\mathbb R)$ is nonnegative and $\lim\limits_{l\to+\infty}\int_{-l}^{l}J(x)\mathrm dx=1$, we know that $E_l$ is non-decreasing with respect to $l>0$ and continuous, bounded for all $(l,t,x)\in(0,+\infty)\times(\rho\omega,\omega]\times \mathbb R$. It then follows from Dini's Theorem that $E_l(t,x)$ converges to zero locally uniformly in $(\rho\omega,\omega]\times \mathbb R$ as $l\to +\infty$. Hence, there exists $\ell_{\epsilon}\geq \hat\ell>0$ such that for each $l_1\in(-\infty,-\ell_{\epsilon}),l_2\in(\ell_{\epsilon},+\infty)$, the following inequality holds
\begin{equation}\label{eq3.17}
\hat z(t)-d\bigg[ \int_{l_1}^{l_2}J(x-y)\hat z(t)\mathrm dy-\hat z(t)\bigg]-\hat z(t)\big[a-b\hat z(t)\big]< 0,\ \ (t,x)\in (\rho\omega,\omega]\times [l_1,l_2].
\end{equation}

It suffices to prove that for each $l_1\in(-\infty,-\ell_{\epsilon}),l_2\in(\ell_{\epsilon},+\infty)$, we have $\hat z(t)\leq u^*_{(l_1,l_2)}(t,x)$ in $[\rho\omega,\omega]\times [l_1,l_2]$. To this end, we fix any $l_1\in(-\infty,-\ell_{\epsilon}),l_2\in(\ell_{\epsilon},+\infty)$ and set
\[ \beta_*=\inf\Big\{ \beta>0:\hat z(t)\leq \beta u^*_{(l_1,l_2)}(t,x)\ \mathrm{for\ all}\ (t,x)\in(\rho\omega,\omega)\times [l_1,l_2] \Big\}. \]
We see that $\beta_*$ is well-defined and positive since $\min\limits_{[\rho\omega,\omega]\times [l_1,l_2]}u_{(l_1,l_2)}^*(t,x)>0$ and $\hat z(t)$ is bounded. It follows from the continuity of $u_{(l_1,l_2)}^*$ and $\hat z$ that $\hat z(t)\leq \beta_* u^*_{(l_1,l_2)}(t,x)$ for all $(t,x)\in(\rho\omega,\omega)\times [l_1,l_2]$. In particular, there must exist $(t_0,x_0)\in (\rho\omega,\omega)\times [l_1,l_2]$ such that $\hat z(t_0)= \beta_* u^*_{(l_1,l_2)}(t_0,x_0)$.

When $\beta_*\leq 1$, the lower bound in \eqref{eq3.16} holds immediately. On the contrary, suppose that $\beta_*>1$. Let $w(t,x)=\hat z(t)-\beta_* u^*_{(l_1,l_2)}(t,x)$. Then by \eqref{eq3.17} and the equation satisfied by $u^*_{(l_1,l_2)}(t,x)$, a simple calculation yields that
\[ w_t(t,x)<d\bigg[ \int_{l_1}^{l_2}J(x-y)w(t,y)\mathrm dy-w(t,x)\bigg]+\hat z(t)\big[a-b\hat z(t)\big]-\beta_*u^*_{(l_1,l_2)}(t,x)[a-bu^*_{(l_1,l_2)}(t,x)] \]
for $(t,x)\in(\rho\omega,\omega)\times [l_1,l_2]$. However, by the definition of $\beta_*$, we have that $w_t(t_0,x_0)=0$. This together with $\int_{l_1}^{l_2}J(x_0-y)w(t_0,y)\mathrm dy-w(t_0,x_0)\leq 0$ leads to that
\begin{align*}
0&=w_t(t_0,x_0)  \\
&\leq \hat z(t_0)\big[a-b\hat z(t_0)\big]-\beta_*u^*_{(l_1,l_2)}(t_0,x_0)[a-bu^*_{(l_1,l_2)}(t_0,x_0)]  \\
&<\hat z(t_0)\big[a-b\hat z(t_0)\big]-\beta_*u^*_{(l_1,l_2)}(t_0,x_0)[a-b\beta_*u^*_{(l_1,l_2)}(t_0,x_0)]=0,
\end{align*}
which is a contradiction. Consequently, $\beta_*\leq 1$ and so $\hat z(t)\leq u^*_{(l_1,l_2)}(t,x)$ for all $(t,x)\in(\rho\omega,\omega)\times [l_1,l_2]$. In fact, the domain $(\rho\omega,\omega)\times [l_1,l_2]$ can be extended to $[\rho\omega,\omega]\times [l_1,l_2]$ since $\hat z(t)$ and $u^*_{(l_1,l_2)}(t,x)$ are both continuous and bounded. Hence, \eqref{eq3.16} holds true and so $\lim_{-\l_1,l_2\to+\infty}u_{(l_1,l_2)}^*(t,x)=z^*(t)$ in $C_{\mathrm{loc}}([\rho\omega,\omega]\times\mathbb R)$.

On the other hand, when $t\in[0,\rho\omega]$, it holds that $z^*(t)=e^{-\delta t}z^*(0)=e^{-\delta t}z^*(\omega)$ and $u^*_{(l_1,l_2)}(t,x)=e^{-\delta t}u^*_{(l_1,l_2)}(0,x)=e^{-\delta t}u^*_{(l_1,l_2)}(\omega,x)$ for $0\leq t\leq \rho\omega$ and $l_1\leq x\leq l_2$. This means that $\lim\limits_{-l_1,l_2\to+\infty}u_{(l_1,l_2)}^*(t,x)=z^*(t)$ in $C_{\mathrm{loc}}([0,\rho\omega]\times\mathbb R)$. As a result, $\lim\limits_{-l_1,l_2\to+\infty}u_{(l_1,l_2)}^*(t,x)=z^*(t)$ in $C_{\mathrm{loc}}([0,\omega]\times\mathbb R)$. The proof is completed.
\end{proof}

\section{Simulations}
\setcounter{equation}{0}

In this section, we present the simulations to illustrate some of our results. Referring to \cite{VL97}, we choose the form of $J$ to be a simple Laplace kernel:
\[ J(x)=\frac{1}{2D}e^{-\frac{|x|}{D}}\quad \mbox{with $D=20$.} \]

Consider the following parameter sets:
\begin{enumerate}[(P1)]
\item $\delta=0.2,d=0.6,a=1.2,b=0.6,\rho=0.6,\omega=1$;
\item $\delta=0.2,d=1,a=1.2,b=0.6,\rho=0.6,\omega=1$;
\item $\delta=0.8,d=0.6,a=1.2,b=0.6,\rho=0.6,\omega=1$;
\end{enumerate}
and the initial condition
\begin{enumerate}[(IC)]
\item $u_0(x)=\cos(\dfrac{\pi x}{l}),x\in(-l,l)$.
\end{enumerate}

\begin{figure}[!ht]
\centering
\includegraphics[height=5.5cm,width=7.5cm]{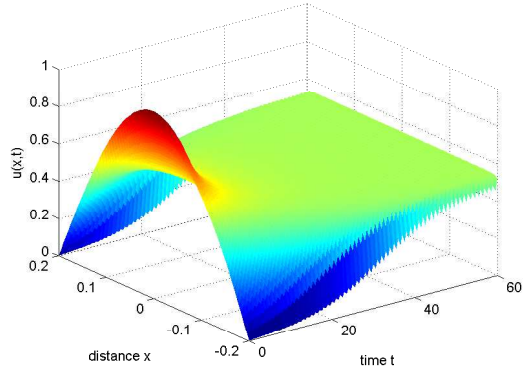}
\caption{Numerical simulations of \eqref{eq1.1} with parameter set (P1) and initial condition (IC), where $l=0.2$.}
\label{fig1}
\end{figure}

\begin{figure}[!ht]
\centering
\includegraphics[height=5.5cm,width=7.5cm]{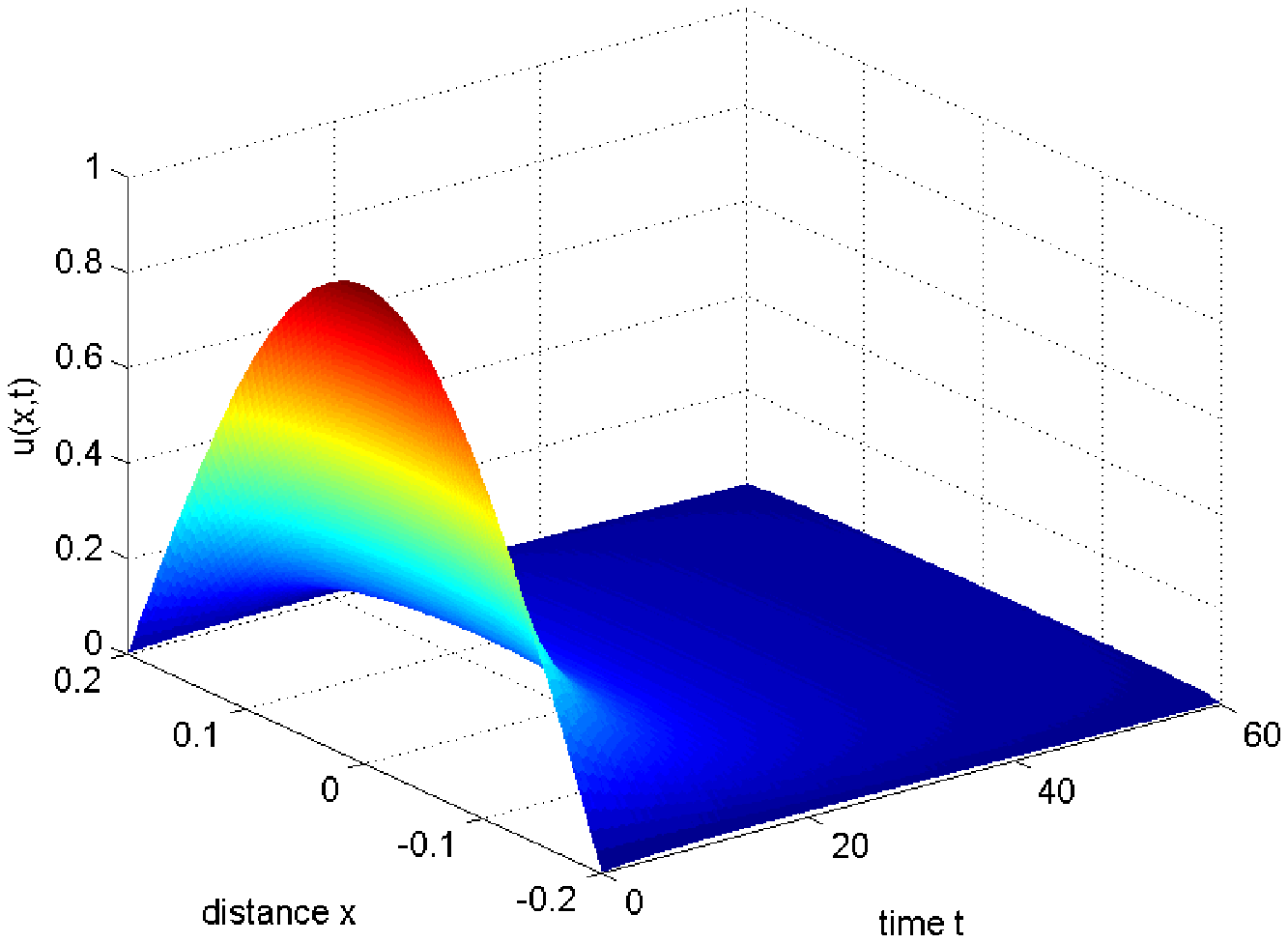}
\includegraphics[height=5.5cm,width=7.5cm]{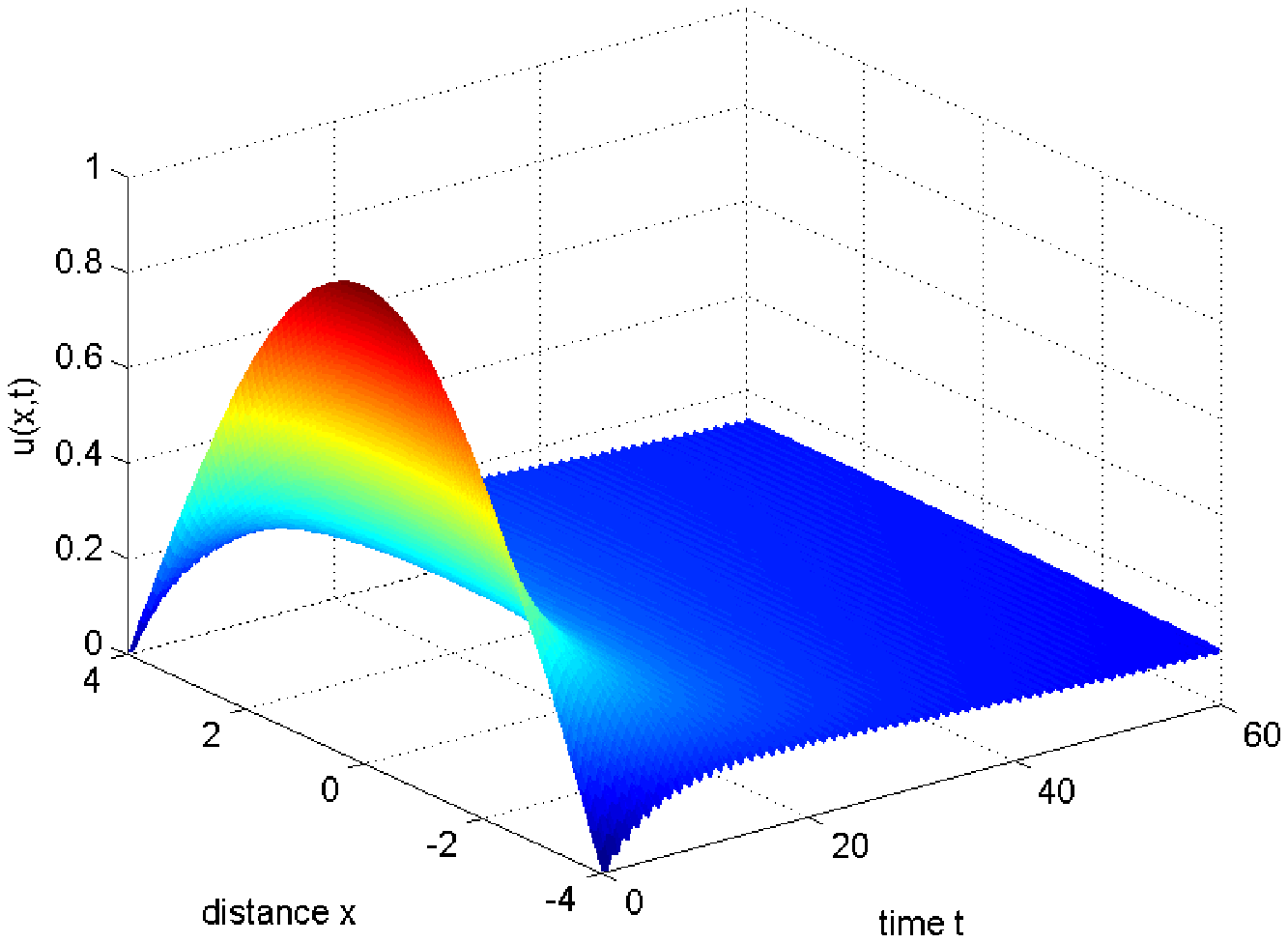}
\caption{Numerical simulations of \eqref{eq1.1} with parameter set (P2) and initial condition (IC). Left: $l=0.2$; Right: $l=4$.}
\label{fig2}
\end{figure}

\begin{figure}[!ht]
\centering
\includegraphics[height=5.5cm,width=7.5cm]{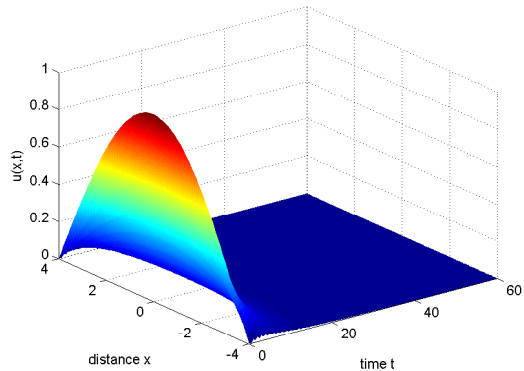}
\caption{Numerical simulations of \eqref{eq1.1} with parameter set (P3) and initial condition (IC), where $l=4$.}
\label{fig3}
\end{figure}

Clearly, the parameter set (P1) satisfies the condition in Theorem \ref{thm1.1} (1). Then Figure \ref{fig1} shows that when the domain length $L:=2l=0.4$, the solution of \eqref{eq1.1} satisfying (P1) and (IC) converges to a spatially nonhomogeneous positive periodic solution. This is consistent with the conclusion of Theorem \ref{thm1.1} (1).

The parameter set (P2) satisfies the condition in Theorem \ref{thm1.1} (2). Then Figure \ref{fig2} shows that when the domain length $L=2l=0.4$, the solution of \eqref{eq1.1} satisfying (P2) and (IC) converges to a spatially nonhomogeneous positive periodic solution, but when $L=2l=8$, the solution of \eqref{eq1.1} with the same parameters and initial condition converges to zero. This is consistent with the conclusion of Theorem \ref{thm1.1} (2).

The parameter set (P3) satisfies the condition in Theorem \ref{thm1.1} (3). Then Figure \ref{fig3} shows that when the domain length $L=2l=8$, the solution of \eqref{eq1.1} satisfying (P3) and (IC) converges to zero. This is consistent with the conclusion of Theorem \ref{thm1.1} (3).

\section{Discussion}
\setcounter{equation}{0}

In this paper, we mainly examine a nonlocal dispersal logistic model with seasonal succession subject to Dirichlet type boundary condition. In Section \ref{sec3}, in order to study the long time behavior of the solutions to \eqref{eq1.1}, we establish a method of time periodic upper-lower solutions, and show that the sign of the eigenvalue $(1-\rho)\sigma_1+\rho\delta$ of the linearized operator can completely determine the asymptotic behavior of the solutions to \eqref{eq1.1}. Meanwhile, we see that the $\omega$-periodic positive solution corresponding to the nonlocal dispersal model \eqref{eq1.1} behaves like the $\omega$-periodic positive solution corresponding to the ODE model \eqref{eq1.2} when the range of the habitat tends to the entire space $\mathbb R$.

In the following, we give some remarks on a nonlocal dispersal logistic model under Neumann type boundary condition, which is associated with model \eqref{eq1.1}, that is,
\begin{equation}\label{eq4.1}
\begin{cases}
u_t=-\delta u, & i\omega<t\leq (i+\rho)\omega,\ l_1\leq x\leq l_2, \\
\displaystyle u_t= d\int_{l_1}^{l_2}J(x-y)\big(u(t,y)-u(t,x)\big)\mathrm dy+u(a-bu), & (i+\rho)\omega<t\leq (i+1)\omega,\ l_1\leq x\leq l_2, \\
u(0,x)=u_0(x)\geq 0, & x\in[l_1,l_2],
\end{cases}
\end{equation}
The kernel function $J:\mathbb R\to \mathbb R$ is assumed to satisfy ${\bf (J)}$. The integral operator $\int_{l_1}^{l_2}J(x-y)\big( u(t,y)-u(t,x) \big)\mathrm dy$ describes diffusion processes, where $\int_{l_1}^{l_2}J(x-y)u(t,y)\mathrm dy$ is the rate at which individuals are arriving at position $x$ from all other places and $\int_{l_1}^{l_2}J(x-y)u(t,x)\mathrm dy$ is the rate at which they are leaving location $x$ to travel to all other sites. Since diffusion takes places only in $[l_1,l_2]$ and individuals may not enter or leave the domain $[l_1,l_2]$, we call it Neumann type boundary condition.

Linearizing system \eqref{eq4.1} at $u=0$, we obtain the time-periodic operator:
\begin{equation}\label{eq4.2}
\tilde L_{(l_1,l_2)}[v](t,x)=\begin{cases}
-v_t-\delta v, & t\in (i\omega,(i+\rho)\omega],\ x\in[l_1,l_2], \\
\displaystyle-v_t+d\int_{l_1}^{l_2}J(x-y)\big(u(t,y)-u(t,x)\big)\mathrm dy+av, & t\in ((i+\rho)\omega,(i+1)\omega],\ x\in[l_1,l_2].
\end{cases}
\end{equation}
A easy calculation yields that $\lambda_1=\delta\rho-a(1-\rho) $
is a eigenvalue of $-\tilde L_{(l_1,l_2)}$ with a positive eigenfunction. Moreover, one can also derive as in Theorem \ref{thm1.1} that
\begin{theorem}\label{thm5.1}
Assume that ${\bf (J)}$ holds and $-\infty<l_1<l_2<+\infty$. Let $u(t,\cdot;u_0)$ be the unique solution to \eqref{eq4.2} with the initial value $u_0(x)\in C([l_1,l_2])$, where $u_0(x)$ is bounded, nonnegative and not identically zero. The following statements are true:
\begin{enumerate}[(1)]
\item If $\delta\rho-a(1-\rho)<0$, then $\lim\limits_{n\to\infty}u(t+n\omega,x;u_0)=z^*(t)$ in $C([0,\omega]\times [l_1,l_2])$, where $z^*(t)$ is the unique positive $\omega$-solution to \eqref{eq1.7};
\item If $\delta\rho-a(1-\rho)\geq 0$, then $\lim\limits_{t\to\infty}u(t,x;u_0)=0$ uniformly for $x\in[l_1,l_2]$.
\end{enumerate}
\end{theorem}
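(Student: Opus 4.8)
The plan is to exploit a structural feature of the Neumann nonlocal operator: the term $d\int_{l_1}^{l_2}J(x-y)\bigl(v(t,y)-v(t,x)\bigr)\,\mathrm dy$ vanishes on functions that are constant in $x$. Hence, for every $z_0\ge 0$, the solution $z(t;z_0)$ of the scalar seasonal ODE of Hsu and Zhao \cite{HZ12} with $z(0)=z_0$, read as a function constant in $x$, is simultaneously an upper- and a lower-solution of \eqref{eq4.1}. The whole theorem then follows by trapping an arbitrary solution of \eqref{eq4.1} between two such ODE trajectories and invoking the known threshold dynamics of the ODE from \cite[Theorem 2.1]{HZ12}. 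This structure also explains why the threshold $\lambda_1=\delta\rho-a(1-\rho)$ is independent of the habitat length, in contrast with the Dirichlet problem: the quantity that plays the role of $\sigma_1$ is now $-a$ for every interval $(l_1,l_2)$, the constant function being the principal eigenfunction of $w\mapsto d\int_{l_1}^{l_2}J(x-y)\bigl(w(y)-w(x)\bigr)\,\mathrm dy+aw$.

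First I would transfer the basic machinery to \eqref{eq4.1}. Global existence and uniqueness of $u(t,\cdot;u_0)$ are obtained season by season exactly as in Lemma \ref{lem3.2}, and the maximum principle and comparison argument of Lemma \ref{lem3.1} carry over with only cosmetic changes, since their proofs use only $J\ge0$ and the continuity of $J$ with $J(0)>0$; in particular, any pair of ordered upper/lower-solutions of the initial-value problem straddling $u_0$ traps $u(\cdot,\cdot;u_0)$. I would then record the key positivity fact: if $u_0\ge0$ and $u_0\not\equiv0$, then $u(\rho\omega,\cdot;u_0)=e^{-\delta\rho\omega}u_0\not\equiv0$, and the nonlocal strong maximum principle on the connected interval $[l_1,l_2]$ (using $J(0)>0$) gives $u(t,x;u_0)>0$ on $(\rho\omega,\omega]\times[l_1,l_2]$; by compactness, $c_1:=\min_{[l_1,l_2]}u(\omega,\cdot;u_0)>0$.

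For conclusion (1), $\lambda_1<0$ means $(1-\rho)a-\rho\delta>0$, so by \cite[Theorem 2.1]{HZ12} problem \eqref{eq1.7} has a unique positive $\omega$-periodic solution $z^*$ and $z(t+n\omega;z_0)\to z^*(t)$ in $C([0,\omega])$ for every $z_0>0$. Choosing $z_0:=\max_{[l_1,l_2]}u_0>0$, the constant-in-$x$ upper-solution $z(\cdot;z_0)$ forces $u(t,x;u_0)\le z(t;z_0)$, hence $\limsup_{n}u(t+n\omega,x;u_0)\le z^*(t)$; with $c_1$ as above, the constant-in-$x$ lower-solution $z(\cdot;c_1)$ together with $u_1:=u(\omega,\cdot;u_0)\ge c_1$ and the $\omega$-periodicity of \eqref{eq4.1} gives $u(t+\omega,x;u_0)=u(t,x;u_1)\ge z(t;c_1)$, hence $\liminf_{n}u(t+n\omega,x;u_0)\ge z^*(t)$. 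Squeezing between $z(t+(n-1)\omega;c_1)$ and $z(t+n\omega;z_0)$, both converging to $z^*(t)$ uniformly on $[0,\omega]$, yields $u(t+n\omega,x;u_0)\to z^*(t)$ in $C([0,\omega]\times[l_1,l_2])$. For conclusion (2), $\lambda_1\ge0$ means $(1-\rho)a-\rho\delta\le0$, so \cite[Theorem 2.1]{HZ12} gives $z(t;z_0)\to0$ as $t\to\infty$ for every $z_0\ge0$; taking $z_0=\max_{[l_1,l_2]}u_0$ and comparing, $0\le u(t,x;u_0)\le z(t;z_0)\to0$, uniformly in $x$ because $z$ does not depend on $x$.

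The one genuinely nonlocal ingredient — and the step I expect to need the most care — is the strict positivity $u(t,\cdot;u_0)>0$ on all of $[l_1,l_2]$ that produces the positive lower bound $c_1$ in case (1): without parabolic smoothing this cannot come from a Harnack inequality and must instead be deduced from the connectedness of $[l_1,l_2]$ together with $J(0)>0$, following the second half of the proof of Lemma \ref{lem3.1}. Everything else reduces to scalar ODE comparison, so no further subtlety is anticipated.
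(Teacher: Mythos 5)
Your argument is correct, and it takes a shorter route than the one the paper implicitly prescribes. The paper disposes of Theorem \ref{thm5.1} by saying ``one can also derive as in Theorem \ref{thm1.1}'', i.e.\ by rerunning the whole machinery: the positive eigenfunction of the linearized periodic problem (here the spatial constant, giving the eigenvalue $\delta\rho-a(1-\rho)$), the pair of upper--lower solutions $\varepsilon\varphi$ and a large constant $M$, the monotone iteration of Theorem \ref{thm3.7}, and the contraction argument with the sequence $\gamma_n$ to force the upper and lower limits to coincide; one must then still identify the resulting periodic solution with $z^*$. You instead observe that the Neumann operator annihilates functions constant in $x$, so every ODE trajectory $z(t;z_0)$ of \eqref{eq1.7}'s initial-value problem is an \emph{exact} solution of \eqref{eq4.1}, and you sandwich $u(t,x;u_0)$ between $z(t;c_1)$ (after one period, using strict positivity to get $c_1=\min u(\omega,\cdot)>0$) and $z(t;\max u_0)$, then import the threshold dynamics of \cite[Theorem 2.1]{HZ12}. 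This bypasses Theorem \ref{thm3.7} and the contraction step entirely and delivers the identification of the limit with $z^*(t)$ for free; the price is that it works only because the environment is spatially homogeneous (it would not survive replacing $a$ by $a(x)$, where the paper's eigenfunction route is the one that generalizes). You correctly isolate the only genuinely nonlocal ingredient, namely the strong positivity on the \emph{closed} interval $[l_1,l_2]$ needed for $c_1>0$; the boundary-of-the-positivity-set argument in the second half of Lemma \ref{lem3.1} does extend to the endpoints here since the Neumann problem imposes no vanishing at $l_1,l_2$ and $J(0)>0$ with continuity makes $\int_{l_1}^{l_2}J(\tilde x-y)V(t_*,y)\,\mathrm dy>0$ at any boundary point $\tilde x$ of the positivity set. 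So the proposal is sound; it is a legitimately more elementary proof of this particular theorem.
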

Form above discussion, we can conclude that in spatial homogeneous environment, the nonlocal diffusion model with seasonal succession under Neumann boundary condition has the same dynamical behavior as that of the corresponding ODE model. When the environment is spatially dependent (e.g., the parameter $a$ in model \eqref{eq1.1} or \eqref{eq4.1} is replaced by a spatially dependent function $a(x)$), one can also obtain the existence of a eigenvalue of the linearized operator associated with a positive eigenfunction under the additional compact support condition on the kernel function $J(x)$. Likewise, the sign of such obtained eigenvalue can completely determine the dynamical behavior of model \eqref{eq1.1} or \eqref{eq4.1}. However, in spatially heterogeneous environment, the criteria governing persistence and extinction of the species becomes more difficult to be obtained (see \cite{SV19,SLLY20} for details). We slao remark that if the term $a-bu$ in \eqref{eq1.1} (resp. \eqref{eq4.1}) is replaced by a continuous and strictly decreasing function $f(u)$ on $[0,+\infty)$ satisfying $f(K)\leq 0$ for some $K>0$, then the conclusions showed in Theorems \ref{thm1.1}-\ref{thm1.2} (resp. Theorems \eqref{thm5.1}) still hold with $a=f(0)$.

\section*{Acknowledgements}
\addcontentsline{toc}{section}{Acknowledgements}

This work is supported by the National Natural Science Foundation of China (No. 11871475) and the Fundamental Research Funds for the Central Universities of Central South University (No. 2020zzts040).

\end{document}